\newtheorem{thm}{Theorem}[section]
\newtheorem{lem}[thm]{Lemma}
\newtheorem{pro}[thm]{Proposition}
\newtheorem{Obs}[thm]{Observation}
\newtheorem*{prol}{Problem}
\theoremstyle{remark}
\date{}
\begin{document}

\title{The first Steklov eigenvalue bound for graphs of positive genus}
\author{Lixiang Chen, Yongtang Shi and Liwen Zhang \\
\footnotesize Center for Combinatorics and LPMC, Nankai University, Tianjin, 300071, PR China\\
\footnotesize  clxmath@163.com, shi@nankai.edu.cn and levenzhang512@163.com }

\maketitle

\begin{abstract} 
Let $G$ be a graph of genus $g$ with boundary $\delta\Omega$. For $g=0$, Lin and Zhao [J. Lond. Math. Soc. 112 (2025), Paper No. e70238] proved an upper bound for the first (non-trivial) Steklov eigenvalue of $(G, \delta\Omega )$, and they posed the problem of determining a corresponding bound for graphs of genus $g>0$. 
In this paper, we prove an $O\left(\frac{g}{|\delta \Omega|}\right)$ bound for a bounded-degree graph of positive genus $g$. Our result can be regarded as a discrete analogue of Kokarev's bound [Adv. Math. 258 (2014), 191-239], up to a constant factor.
\\

	\noindent
	\textbf{Keywords:}  Steklov eigenvalue, graph, genus, circle packing\\
	
	\noindent
	\textbf{AMS subject classification 2020:} 05C10, 49R05, 47A75, 49J40.

\end{abstract}

\section{Introduction}
The Steklov eigenvalue problem, originating from Steklov's century-old work on liquid sloshing \cite{Kuznetsov2014,Stekloff1902}, is an active field connecting spectral theory, geometry, and mathematical physics.
Let $M$ be a compact smooth orientable Riemannian manifold of dimension $d \geq 2$ with a
smooth boundary $\partial M$. The Dirichlet-to-Neumann operator $\mathcal{D}: C^{\infty}(\partial M) \to C^{\infty}(\partial M) $
is defined by $\mathcal{D}f=\partial_{\nu}\hat{f}$, where $\partial_{\nu}$ is the outward normal derivative
along $\partial M$, and where the function $\hat{f} \in C^{\infty}( M)$ is the unique harmonic extension of $f$ to the interior of $M$.
The eigenvalues of $\mathcal{D}$, known as the Steklov eigenvalues of $M$, are discrete and can be ordered as
$0=\lambda_1\leq\lambda_2 \leq \lambda_3 \leq \cdots \to \infty$.
Here, $\lambda_2$ is called the first (non-trivial) Steklov eigenvalue.

The initial motivation for the geometric study of Steklov eigenvalues comes from Weinstock's 1954 work \cite{Weinstock1954}, where he demonstrated that, for any simply connected planar domain of a given perimeter, $\lambda_2$ attains its maximum if and only if the domain is a disk.
For bounded Lipschitz domains of fixed volume in Euclidean space, Brock  \cite{Brock2001} proved that $\lambda_2$ is maximized by the ball.
Colbois, Girouard and Raveendran \cite{Colbois2018} provided upper bounds for $\lambda_k$ of domains in Euclidean space. For a detailed account of recent advances on the Steklov eigenvalue problem, see the survey  \cite{ColboisGirouardGordonSher2023}.

As has been done for the Laplace operator, one can define a discrete Steklov eigenvalue problem \cite{Hassannezhad2020, Hua2017}, which is defined on graphs with boundaries. Several results on the bounds for the first Steklov eigenvalue of graphs are obtained.   
The lower bounds are established by Perrin \cite{Perrin2019} and Shi-Yu \cite{Shi2022,ShiYu2025}.
The upper bounds are studied in various settings, including: for finite subgraphs of integer lattices by Han-Hua \cite{Han2023}, for subgraphs of Cayley graphs of discrete groups with polynomial growth by Perrin \cite{Perrin2021}, for trees by He-Hua \cite{He2022} and Lin-Zhao \cite{Linbull2025,Lintree2025}, and for planar graphs and block graphs by Lin-Zhao \cite{lin2025steklov}. 

All graphs in this paper are finite, simple, and undirected graphs with a non-empty boundary. 
For a graph $G=(V,E)$ with boundary $\delta \Omega \subset V$,
denote by $\mathbb{R}^{V}$ the real function space defined on $V$, which is the $|V|$-dimensional Euclidean space.  
For $f\in \mathbb{R}^V$, the Laplace operator $\Delta:\mathbb{R}^V \to \mathbb{R}^V$ is defined by 
\begin{align*}
    \Delta f:~&V \to \mathbb{R}\\
    &x \mapsto (\Delta f)(x)=\sum_{\{x,y\}\in E}\left(f(x)-f(y)\right),
\end{align*}
and the discrete outward normal derivative $\partial_{\nu}:\mathbb{R}^V \to \mathbb{R}^{\delta \Omega}$ is defined by 
\begin{align*}
   \partial_{\nu}f:~& \delta\Omega \to \mathbb{R}\\
    &x \mapsto (\partial_{\nu}f)(x)=\sum_{\{x,y\}\in E}\left(f(x)-f(y)\right).
\end{align*}
The Steklov eigenvalue problem on $(G,\delta\Omega)$ aims to find real numbers $\lambda \in \mathbb{R}$ and non-trivial functions $f \in \mathbb{R}^V$ such that 
\begin{align*}
    \begin{cases}
       (\Delta f) (x)=0,  &\mbox{if $x\in V\setminus\delta\Omega $},\\
        (\partial_{\nu}f)(x)=\lambda f(x), & \mbox{if $x\in \delta \Omega $}.
    \end{cases}
\end{align*}
 Such a $\lambda$ is called a Steklov eigenvalue of the graph $G$ with  boundary  $\delta \Omega$, and the function $f$ is called the Steklov eigenfunction associated to $\lambda$. 
The Steklov spectrum forms a sequence as follows
\begin{align*}
0=\lambda_1(G,\delta\Omega)\leq  \lambda_2(G,\delta\Omega)  \leq \cdots\leq \lambda_{|\delta\Omega|}(G,\delta\Omega). 
\end{align*}
In fact, $\lambda_2(G,\delta\Omega)>0$ if and only if $G$ is connected. 
The Steklov eigenvalues can be characterized by the Rayleigh quotient
\begin{align}\label{eq:Rq1}
  \lambda_k(G,\delta \Omega)
  &= \min\limits_{\substack{W\subset \mathbb{R}^{V}, \dim W = k}}
     \max\limits_{f \in W} R(f) \notag \\
  &= \min\limits_{\substack{W\subset \mathbb{R}^{V}, \dim W = k-1, \\ W\perp \mathbf{1}_{\delta \Omega}}}
     \max\limits_{f \in W} R(f),
\end{align}
where
\begin{equation*}
  R(f)=\frac{\sum_{\{x,y\} \in E}(f(x)-f(y))^2}{\sum_{x\in \delta \Omega}f^2(x)}
\end{equation*}
and $\mathbf{1}_{\delta \Omega}$ denotes the characteristic function on $\delta \Omega$. Analogous to normalized Laplacian eigenvalues discussed in Chung's book \cite[Chapter 1]{Chung1997}, when $k=2$, \eqref{eq:Rq1} has the following formulations
\begin{align}
  \lambda_2(G,\delta \Omega)&=\min_{f\perp \mathbf{1}_{\delta \Omega}}\frac{\sum_{\{x,y\} \in E}(f(x)-f(y))^2}{\sum_{x\in \delta \Omega}f^2(x)}  \label{eq:orig}  \\
  &=|\delta \Omega|\min_f\frac{\sum_{\{x,y\} \in E}(f(x)-f(y))^2}{\sum_{\{x,y\}\subset \delta \Omega}(f(x)-f(y))^2}. \label{eq:Ray}
\end{align}


The \textit{genus} $g$ of a graph $G$ is the minimal integer such that $G$ can be embedded on a surface of genus $g$ with none of its edges crossing. For a planar graph $G$ (of genus $g=0$) with boundary $\delta\Omega$, Lin and Zhao \cite[Theorem 1.1]{lin2025steklov} gave an insightful bound $\lambda_2(G,\delta\Omega) \leq \frac{8D}{|\delta\Omega|}$,
where $D$ is the maximum degree of $G$. Furthermore, they posed the following problem \cite[Problem 3.2]{lin2025steklov} regarding graphs of genus $g$. 

\begin{prol}\cite[Problem 3.2]{lin2025steklov}
    What is the upper bound of Steklov eigenvalues for genus $g$ graphs with boundary $\delta\Omega$ when the degree is bounded above by $D$?
\end{prol}

Using metrical deformation via flows, Lin, Liu, You, and Zhao \cite{Lin2024} proved a bound for graphs of positive genus $g$.
\begin{thm}\label{thm-lin}\cite[Theorem 1.2]{Lin2024}
Let $G$ be a graph of positive genus $g$ with boundary $\delta\Omega$ such that
the vertex degree is bounded by $D$. If $|\delta\Omega| \geq  \max \{3\sqrt{g},|V|^{\frac{1}{4}+\epsilon},9\}$ for some $\epsilon>0$, then
\begin{equation*}
\lambda_2(G,\delta\Omega) \leq O\left(\frac{D g^3}{|\delta\Omega|}\right).
\end{equation*}
\end{thm}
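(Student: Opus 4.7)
The plan is to exploit the variational characterization \eqref{eq:Ray} of $\lambda_{2}(G,\delta\Omega)$ and construct an explicit test function from a circle packing representation of $G$ on a genus-$g$ surface. For $g=0$, Lin--Zhao \cite{lin2025steklov} pack $G$ on $S^{2}$, use the three $\mathbb{R}^{3}$-coordinates as candidate test functions, and apply Hersch's Möbius balancing to achieve orthogonality to $\mathbf{1}_{\delta\Omega}$. For $g\geq 1$, I would imitate this by first packing $G$ on a genus-$g$ Riemann surface $\Sigma$, then pushing the coordinates down to the sphere via a meromorphic map.

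Concretely, by the Beardon--Stephenson circle packing theorem on bounded-genus surfaces, $G$ admits a packing by disks $\{D_{v}\}_{v\in V}$ on some compact Riemann surface $\Sigma$ of genus $g$, with tangency pattern matching $E$. I would then select a non-constant meromorphic $\phi:\Sigma\to S^{2}$ of degree $d\leq g+1$ (such a pencil always exists by general facts about the gonality of compact Riemann surfaces), compose with $S^{2}\hookrightarrow \mathbb{R}^{3}$ to obtain coordinates $X_{1},X_{2},X_{3}$ with $\sum_{i}X_{i}^{2}\equiv 1$ on $\Sigma$, and set $f_{i}(v):=X_{i}(c_{v})$, where $c_{v}$ is the center of $D_{v}$. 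A Hersch-type degree argument, ranging over Möbius transformations of the target sphere, then fixes $\phi$ so that each $f_{i}$ is orthogonal to $\mathbf{1}_{\delta\Omega}$, making them admissible test functions in \eqref{eq:Ray}.

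The final step is to estimate the Rayleigh quotient. The denominator, summed over $i=1,2,3$ and using $|X(v)-X(w)|^{2}=2-2\langle X(v),X(w)\rangle$ combined with $\sum_{v\in\delta\Omega}X_{i}(v)=0$, collapses to $\Omega(|\delta\Omega|^{2})$, so some single $i$ contributes $\Omega(|\delta\Omega|^{2})$ on its own. For the numerator, standard circle-packing estimates give per-edge contribution $O(r_{x}^{2}+r_{y}^{2})$ on $\Sigma$; pulling through the degree-$d$ cover $\phi$ costs a factor $O(g)$ per edge, edges whose disks lie near the $O(g)$ branch points of $\phi$ incur an additional $O(g)$ from the non-Lipschitz behaviour of $\phi$ there, and the max-degree bound $D$ supplies the final factor, yielding $O(Dg^{3})$ in total. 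The ratio in \eqref{eq:Ray} is thus $O(Dg^{3}/|\delta\Omega|^{2})$, so multiplying by $|\delta\Omega|$ produces the claimed $O(Dg^{3}/|\delta\Omega|)$ bound. The main obstacle is an honest accounting of the branch-point losses: the hypotheses $|\delta\Omega|\geq 3\sqrt{g}$ and $|\delta\Omega|\geq |V|^{1/4+\epsilon}$ are designed precisely to prevent $\delta\Omega$ from concentrating near the $O(g)$ branch points of $\phi$, via pigeonhole and the topological freedom available in Hersch balancing, thereby making the above Rayleigh estimates valid in the boundary regime that dominates \eqref{eq:Ray}.
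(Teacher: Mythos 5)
This statement is not proved in the paper at all: it is quoted from Lin--Liu--You--Zhao \cite{Lin2024}, where it is established by metrical deformation via flows. Your route --- a Beardon--Stephenson/He--Schramm packing of $G$ on a genus-$g$ surface, a meromorphic map to $\widehat{\mathbb{C}}$ of degree $O(g)$, and Hersch-type M\"obius balancing --- is therefore not a reconstruction of the cited proof; it is essentially the strategy this paper uses (following Kelner) to prove the stronger Theorem \ref{thm:main}. That would still be acceptable if the argument were complete, but as written it has genuine gaps.

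The central gap is the numerator estimate. You apply $\phi$ directly to the \emph{finite} circles of the packing and assert a per-edge cost of $O(r_x^2+r_y^2)$ inflated by a factor $O(g)$ ``per edge'' and an ``additional $O(g)$'' near branch points. No estimate backs these factors: a degree-$O(g)$ meromorphic map carries no modulus-of-continuity or eccentricity bound depending only on $g$ and $D$, so the image of a finite circle can be a long, thin curve whose squared diameter vastly exceeds its area, and the total-area-times-degree argument controls $\sum \mathrm{diam}^2$ only when the image curves are nearly round --- i.e.\ when the circles are infinitesimal and stay away from branch points. This is precisely why the present paper cannot work with $G$ itself: it must refine by hexagon subdivisions so the circles shrink (Lemma \ref{lem:con}), transfer the eigenvalue bound back through $(\xi,\ell)$-immersions (Theorem \ref{thm:sd}, Lemma \ref{Lem:H}, Lemma \ref{lem:main}), and dispose of the $\kappa$-neighborhoods of the $O(g)$ branch points by a limiting argument. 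Your proposal has no substitute for any of this. The balancing step is also incomplete: the center-of-mass argument (Lemma \ref{lem:centroid}) needs that no point of $\mathbb{S}$ lies in half of the boundary-circle image regions, and with covering multiplicity $O(g)$ this forces $|\delta\Omega|\gtrsim g$, which the hypothesis $|\delta\Omega|\ge 3\sqrt{g}$ does not supply. Finally, the bookkeeping is symptomatic of the hand-waving: if your estimates were valid they would yield $O(Dg/|\delta\Omega|)$ rather than the claimed $O(Dg^3/|\delta\Omega|)$, and the hypotheses $|\delta\Omega|\ge\max\{3\sqrt{g},|V|^{1/4+\epsilon},9\}$ never genuinely enter your argument, whereas in the actual flow-based proof they are where the work is done.
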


A graph $G$ is said to be \textit{bounded-degree} if its maximum degree $D$ is bounded by an absolute constant.
For a bounded-degree graph, we give the following result, in which the dependence on $g$ is $O(g)$.
\begin{thm}\label{thm:main}
Let $G$ be a bounded-degree graph of positive genus $g$ with boundary $\delta\Omega$. 
Then
\begin{equation*}
  \lambda_2(G,\delta \Omega)\leq O\left(\frac{g}{|\delta \Omega|}\right).
\end{equation*}
\end{thm}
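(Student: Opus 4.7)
The plan is to carry out a discrete analogue of Kokarev's argument, whose core is a Grigor'yan--Netrusov--Yau (GNY) style decomposition lemma combined with the conformal uniformization of a genus $g$ surface. In the discrete setting, uniformization is to be replaced by a \emph{circle packing} on a closed surface of genus $g$ of constant curvature, which is available (by He--Schramm / Beardon--Stephenson) for any triangulation of such a surface. The bounded-degree hypothesis is what will let us transfer continuous capacity estimates into discrete Dirichlet-energy estimates without losing dependence on $|V|$.

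First, I would embed $G$ on a surface $\Sigma_g$ realizing its genus, augmenting edges if necessary to obtain a triangulation whose maximum degree is still $O(D)$. Applying the circle packing theorem for triangulations of $\Sigma_g$ produces a metric of constant curvature on $\Sigma_g$ with total area $O(g)$ (hyperbolic for $g \geq 2$, flat suitably normalized for $g=1$), together with a family of pairwise internally disjoint geodesic disks $\{D_v\}_{v\in V}$ realizing the tangency pattern of the triangulation. Define a probability measure $\mu$ on $\Sigma_g$ by spreading mass $1/|\delta\Omega|$ uniformly over each $D_v$ with $v\in\delta\Omega$.

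Second, I would apply a GNY/Kokarev-type packing lemma on $(\Sigma_g,\mu)$: there exist two disjoint annular capacitors $(A_1, A_1^+)$ and $(A_2, A_2^+)$ with $\mu(A_i) \geq c/(g+2)$ and continuous $2$-capacity $O(1)$, where the factor $(g+2)$ encodes the topological complexity of $\Sigma_g$. Pull these through the packing to obtain discrete cutoffs $f_1, f_2$ on $V$, taking value $1$ at $v$ if $D_v\subset A_i$ and $0$ if $D_v\cap A_i^+ = \emptyset$, interpolating in between by evaluating the continuous cutoff at circle centers. The Ring Lemma and the bounded-degree hypothesis ensure that neighbouring disks have comparable radii, so the discrete Dirichlet energy $\sum_{\{x,y\}\in E}(f_i(x)-f_i(y))^2$ is bounded by an absolute constant times the continuous $2$-capacity, hence $O(1)$.

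Finally, I would substitute $f_1, f_2$ into the variational characterization \eqref{eq:orig}: the numerator is $O(1)$ while the boundary norm satisfies $\sum_{x\in\delta\Omega} f_i(x)^2 \geq |\delta\Omega|\cdot\mu(A_i) \geq c|\delta\Omega|/(g+2)$. Since $\mathrm{span}\{f_1,f_2\}$ is two-dimensional, a suitable linear combination is orthogonal to $\mathbf{1}_{\delta\Omega}$ and yields $\lambda_2(G,\delta\Omega) = O(g/|\delta\Omega|)$. The main obstacle I anticipate is the comparison between continuous $2$-capacity on $\Sigma_g$ and the discrete edge-energy — in particular, ruling out a spurious $\log$ factor from circles lying in the annular gap $A_i^+\setminus A_i$. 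This is precisely the step where bounded degree, together with the Ring Lemma and the area bound $\mathrm{Area}(\Sigma_g)=O(g)$, must combine to absorb all constants into a dimension-free bound.
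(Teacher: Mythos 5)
Your overall strategy---a discrete Kokarev argument via circle packing plus a capacitor decomposition---is a genuinely different route from the paper's (which follows Spielman--Teng/Kelner: a degree-$O(g)$ analytic map to the sphere from Theorem~\ref{thm:RR}, a M\"obius center-of-mass normalization as in Lemma~\ref{lem:centroid}, and the three coordinate functions as test functions), but as written it has a real gap at the continuous-to-discrete transfer. Evaluating the continuous cutoff at circle centers and invoking the Ring Lemma plus bounded degree does \emph{not} bound the discrete Dirichlet energy by the continuous $2$-capacity: the Ring Lemma only makes tangent circles comparable to each other, it does not make them small relative to the annuli $A_i\subset A_i^+$, which are produced by the decomposition lemma with no reference to the packing. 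A tangency between two circles lying on opposite sides of the gap $A_i^+\setminus A_i$ contributes $\Theta(1)$ to the discrete energy while the continuous energy nearby is negligible, and nothing in your argument controls the number of such crossings independently of the graph; the same smallness issue corrupts the denominator, since $f_i(v)=1$ requires $D_v\subset A_i$, so $\sum_{x\in\delta\Omega}f_i(x)^2$ need not dominate $|\delta\Omega|\,\mu(A_i)$ when boundary disks straddle $\partial A_i$. This is precisely the difficulty the paper spends most of its machinery on: it performs $k$ hexagon-subdivisions so that, by the Bowers--Stephenson convergence (Lemma~\ref{lem:con}), the circles become arbitrarily small, proves the estimate for $G^{(k)}$ with boundary $\delta\Omega^{(k)}$, and then transfers the bound back to $G$ through the $(\xi,\ell)$-immersion comparison (Theorem~\ref{thm:sd}, Lemmas~\ref{Lem:H} and~\ref{lem:main}). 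Some refinement-and-transfer step of this kind, or a genuinely intrinsic discrete capacity argument in the graph metric, is indispensable and is absent from your proposal.

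A secondary issue is the key lemma you cite as a black box: two disjoint capacitors on an arbitrary genus-$g$ surface with $\mu(A_i)\geq c/(g+2)$ and capacity $O(1)$ is not a standard statement. In the Grigor'yan--Netrusov--Yau/Kokarev circle of results the genus enters either through the degree of a conformal branched covering of $S^2$ (pulling back multiplies energy by the degree, giving capacity $O(g)$ with measure bounded below by a universal constant) or through covering-multiplicity constants; moreover, on a hyperbolic surface metric annuli of fixed ratio need not have $O(1)$ capacity and can wrap around handles. Any of these variants still yields the product $O(g)$, so this part is fixable, but it needs a precise formulation and proof (or an exact citation) rather than an appeal to a ``GNY/Kokarev-type'' lemma; also note that the circle-packing metric's area normalization plays no role once the statement is made conformally invariant, whereas in your sketch the area bound is doing suggestive but not actual work.
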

For an orientable compact surface $M$ of genus $g$ with boundary $\partial M$, Kokarev \cite[Theorem $A_1$]{Kokarev2014} showed that $\lambda_2(M,\partial M)\leq \frac{8\pi(g+1)}{|\partial M|}$. Our result can be viewed as a discrete analogue of Kokarev's result, up to a constant factor.

\section{Proof idea}
Our proof is based on the framework of Spielman-Teng \cite{ST2007} for bounding the Fiedler value of planar graphs. 
They represented a planar graph as a circle packing on the unit sphere in $\mathbb{R}^3$, with vertices corresponding to circle centers and edges corresponding to tangencies between circles.
By analyzing the geometric distribution of circle centers, they derived an upper bound on the graph's Fiedler value.
In a recent work, Lin and Zhao \cite{lin2025steklov} applied this framework to bound the first Steklov eigenvalue of planar graphs (genus $g=0$).
For positive $g$, Kelner generalized the result of Spielman-Teng to graphs of genus $g$.
Following this line of work, we give a bound for the first Steklov eigenvalue of graphs of genus $g$.

The outline of our proof is as follows. First, we construct a univalent circle packing for a graph on a Riemann surface $S$ of genus $g$ by Theorem \ref{thm:CPT}.
Although it is very challenging to construct a discrete analytic map that sends this circle packing to the Riemann sphere $\widehat{\mathbb{C}}$, 
there exists a continuous one that is almost everywhere  $O(g)$-to-one, except for $O(g)$ points where this property fails (see Theorem \ref{thm:RR}). 
We shall actually pass to the continuous theory to prove our result.
To do this,  we refine the graph so that it admits a circle packing with arbitrarily small circles. Lemma \ref{lem:main}  shows that it suffices to prove Theorem \ref{thm:main} for such refined graphs. 
We then show that there is a continuous map that takes \emph{almost} all of the circles to \emph{almost circular} curves on the unit sphere in $\mathbb{R}^3$.
Finally, using the \emph{almost-circle packing} representation, we derive the desired upper bound on the Steklov eigenvalues.

In the remainder of this section, we  introduce several necessary notions and present auxiliary results for the proof.
For the sake of continuity, all proofs are deferred to Section \ref{sec:proof}.

\subsection{The map from a Riemann surface to the sphere}
This section provides geometric intuition by briefly recalling the basic notions of Riemann surfaces, circles on these surfaces, and the degrees and branch points of analytic maps between them.  For a more detailed exposition, see \cite{Forster1981}.

Recall that an \emph{$n$-dimensional manifold} is a structure that looks locally like a Euclidean space $\mathbb{R}^n$.
More formally, a manifold $M$ can be covered by open sets $S_i$, each endowed with a homeomorphism $\varphi_i:S_i\rightarrow B_n$, 
where $B_n$ is the (open) unit ball $\{|x|<1: x \in \mathbb{R}^n\}$. 
To avoid cusps and such, these maps are required to be mutually compatible. 
In particular, for any overlapping charts, the compositions 
$\varphi_j\circ\varphi_i^{-1}: \varphi_i(S_i\cap S_j)\rightarrow \varphi_j(S_i\cap S_j)$ are diffeomorphisms. 
The two-dimensional orientable manifolds are precisely surfaces of genus $g$, as we need.

An \emph{$n$-dimensional complex manifold} is the natural holomorphic generalization of the real manifold.
Similarly, it can be represented as a union of open sets $S_i$, each endowed with a homeomorphism $\varphi_i:S_i\rightarrow B_{\mathbb{C}^n}$, 
where $B_{\mathbb{C}^n}$ is the complex unit ball $\{|x|<1: x \in \mathbb{C}^n\}$. 
It is required that the compositions $\varphi_j\circ\varphi_i^{-1}$ be biholomorphic.
Thus, an $n$-dimensional complex manifold $M$ is a $2n$-dimensional real manifold equipped with an additional complex analytic structure.
This structure allows the use of many notions and results from standard  complex analysis. 
The basic idea is to define these concepts locally on each $S_i$ as before, and the compatibility conditions on overlaps ensure that they are well-defined globally. 
Let $M=(S_i^{M},\varphi_i^M)$ and $N=(S_i^{N},\varphi_i^N)$ be complex manifolds of the same dimension. A function $f:M \to N$ is called \emph{a holomorphic map} if its restriction to each map $f_{ij}:S_i^{M} \to S_j^{N}$ is holomorphic for all $i$ and $j$. This definition is well-defined on overlaps, since the compositions $\varphi_j^M \circ (\varphi_i^M)^{-1}$ and $\varphi_j^N \circ (\varphi_i^N)^{-1}$ are holomorphic.

A one-dimensional complex manifold is called a \emph{Riemann surface}. In this paper, all Riemann surfaces considered are compact. 
There is a natural orientation on the complex plane, which induces an orientation on the complex manifold. 
Hence, all complex manifolds (including Riemann surfaces) are orientable. 
Topologically, all compact Riemann surfaces are two-dimensional orientable real manifolds. 
However, the same underlying topological space can support a wide variety of distinct complex structures, which are imposed by the map $\varphi_i$.

The definition of a Riemann surface does not involve a metric structure.
Indeed, it is not required that the different $\varphi_i$ agree on distance measurement over overlapping regions.
Although a Riemann surface can be endowed with various metrics, there exists a canonical choice, known as the \emph{constant curvature metric}.  
It provides an intrinsic notion of distance on any Riemann surface,  which allows one to define a \emph{circle on the surface} as a contractible simple closed curve whose points all lie at a fixed distance from a given center.

The \emph{Riemann sphere} $\widehat{\mathbb{C}}$ is topologically a sphere. 
It can be viewed as the complex plane $\mathbb{C}$ together with an additional point, the point at infinity $\infty$, which makes the plane compact.
The correspondence can be visualized via the stereographic projection, which projects the sphere from its North Pole onto the plane.
More precisely, every point on the sphere except the North Pole corresponds to a point in the complex plane, while the North Pole corresponds to the point at infinity $\infty$.

Let $f: \mathbb{C} \to \mathbb{C}$ be a non-constant analytic in a neighborhood of the origin. 
Without loss of generality, set $f(0)=0$. 
By a result in single-variable complex analysis, the function $f$ can be expanded as a power series $f(z)=a_1z+a_2z^2+a_3z^3+\cdots$ in some neighborhood of the origin.
Let $a_k$ be the first nonzero coefficient in this expansion.  
If $k=1$, then $f(z)$ is an invertible analytic and hence a local isomorphism. 
In particular, it is a \emph{locally conformal map}, which preserves angles between intersecting curves and sends infinitesimal circles to infinitesimal circles. 
If $k >1$, then $f$ has \emph{a branch point of order $k$} at the origin. 
In this case, $f$ behaves like $f(z)=z^k$ up to a scale factor and higher-order infinitesimal terms. 
This function is $k$-to-one on a small neighborhood of the origin, excluding the origin itself. 
The preimages of the points in this small neighborhood thus trace out $k$ different ``sheets'' that all intersect at the origin. 
This confluence of sheets is the only type of singularity that can occur in an analytic map.

Let $f:M \to N$ be an analytic map between two Riemann surfaces. 
Since the local behavior discussed above is identical for Riemann surfaces, the map $f$ has a well-defined \emph{degree} $k$.
Except for finitely many points, each point in $N$ has exactly $k$ preimages under $f$, and $f$ has a nonzero derivative at all such points.
A branch point is a point  $p \in M$ where $f'(p)=0$, and $f(p)$ has fewer than $k$ preimages.

We invoke the following result of Kelner, which is a direct consequence of the Riemann-Roch theorem for maps between Riemann surfaces.

\begin{thm} \label{thm:RR} \cite[Theorem 4.4]{Kelner2006Spec}
Let $S$ be a Riemann surface of genus $g$. There exists an analytic map $f: S\rightarrow \widehat{\mathbb{C}}$ of degree $O(g)$ and with $O(g)$ branch points.
\end{thm}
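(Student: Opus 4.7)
The plan is to prove Theorem \ref{thm:RR} as a direct consequence of the Riemann--Roch theorem together with the Riemann--Hurwitz formula. A non-constant analytic map $f : S \to \widehat{\mathbb{C}}$ is the same thing as a non-constant meromorphic function on $S$, and its degree equals the degree of the polar divisor of $f$. To produce such a function with few poles, I would fix an arbitrary point $p \in S$ and look at the Riemann--Roch space $L(D)$ associated to the divisor $D = (g+1)\,p$. The Riemann--Roch theorem yields
\[
\ell(D) \;\geq\; \deg D - g + 1 \;=\; 2,
\]
so $L(D)$ strictly contains the one-dimensional space of constants, and therefore admits a non-constant element $f \in L(D)$. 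Since the only pole of $f$ is at $p$ with order at most $g+1$, the induced analytic map $f : S \to \widehat{\mathbb{C}}$ has degree $n \leq g+1 = O(g)$.

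Next, I would bound the number of branch points of $f$ using the Riemann--Hurwitz formula. Applied to $f : S \to \widehat{\mathbb{C}}$ and using that $\widehat{\mathbb{C}}$ has genus zero, it reads
\[
2g - 2 \;=\; -2n + \sum_{q \in S}(e_q - 1),
\]
where $e_q$ denotes the ramification index of $f$ at $q$. Rearranging gives
\[
\sum_{q \in S}(e_q - 1) \;=\; 2g + 2n - 2 \;\leq\; 2g + 2(g+1) - 2 \;=\; 4g.
\]
Every branch point (a point $q$ with $e_q \geq 2$) contributes at least $1$ to the left-hand side, so the number of branch points of $f$ is at most $4g = O(g)$, as required.

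There is essentially no serious obstacle; the statement is a clean corollary of two classical theorems. The only small subtlety is verifying that the function produced by Riemann--Roch is genuinely non-constant, and this is guaranteed precisely because $\ell(D) \geq 2$ forces $L(D)$ to contain at least one function beyond the constants. If one wanted the degree to equal exactly $g+1$, one could alternatively take $D$ supported on $g+1$ distinct points; the combination of Riemann--Roch and Riemann--Hurwitz still delivers both the $O(g)$ degree bound and the $O(g)$ bound on the number of branch points in the same way.
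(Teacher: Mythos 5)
Your proof is correct, and it follows exactly the route the paper indicates for this cited result of Kelner: Riemann--Roch applied to the divisor $(g+1)p$ produces a non-constant meromorphic function of degree at most $g+1$, and Riemann--Hurwitz then bounds the number of branch points by $O(g)$. Since the paper simply invokes Kelner's theorem as ``a direct consequence of the Riemann--Roch theorem,'' your argument is essentially the intended (and standard) proof, spelled out correctly.
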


\subsection{Circle packings on surfaces of arbitrary genus}
Let $G$ be a graph of genus $g$ embedded on a surface $S$ of the same genus without edge crossings, so that the embedding partitions $S$ into faces.
If every face is a triangle, then $G$ is said to be \emph{fully triangulated}, and the embedding is called a \emph{triangulation} of $S$.
If $G$ is not fully triangulated, additional edges can be added to make it.
By \eqref{eq:Rq1}, adding edges does not decrease $ \lambda_k(G,\delta \Omega)$,
and thus we may, without loss of generality, assume that $G$ is fully triangulated.

Let $S$ be a compact Riemann surface endowed with its constant curvature metric.
A \emph{circle packing} $\mathcal{P}=\{C_1,\dots,C_n\}$ on $S$ is a finite collection of possibly overlapping circles on $S$. 
The packing $\mathcal{P}$ is said to be \emph{univalent} if the interiors of the circles $C_i$ are pairwise disjoint.
Its \emph{associated graph} $A(\mathcal{P})$ is defined by assigning a vertex $v_i$ to each circle $C_i$ and joining $v_i$ to $v_j$ by an edge if and only if $C_i$ and $C_j$ are tangent.
Conversely, we say that $\mathcal{P}$ is a circle packing for the graph $A(\mathcal{P})$ on $S$.

We will use the following circle packing theorem, which can be regarded as a natural generalization of the Koebe-Andreev-Thurston theorem  \cite{andreev1970convex,Koebe1936,Thurston3M} for planar graphs.
The theorem was first proved in a restricted form by Beardon-Stephenson \cite{beardon1990uniformization}, and subsequently established in full generality by He-Schramm \cite{he1993fixed}.

\begin{thm}[Circle packing theorem]\label{thm:CPT}
Let $ G $ be a fully triangulated graph of genus $ g $. There exist a Riemann surface $ S $ of genus $ g $ and a univalent circle packing $ \mathcal{P} $ such that $ \mathcal{P} $  is a circle packing for $ G $ on $ S $. Moreover, this packing is unique up to the automorphisms of $ S $.
\end{thm}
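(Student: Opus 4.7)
The plan is to convert the geometric problem into a combinatorial system for positive radii assigned to the vertices of $G$, then establish existence via a Perron-type or variational argument and uniqueness via a maximum principle. Since the target surface $S$ is not prescribed in advance, the strategy is to build $S$ and $\mathcal{P}$ simultaneously by passing to the universal cover. Let $\Sigma_g$ be the underlying oriented topological surface of genus $g$ triangulated by $G$. By the uniformization theorem, any conformal structure on $\Sigma_g$ has universal cover of constant curvature $\kappa \in \{+1, 0, -1\}$, modeled by $X \in \{\widehat{\mathbb{C}}, \mathbb{C}, \mathbb{H}\}$ for $g = 0, 1, \geq 2$ respectively, with deck group $\Gamma$ acting by orientation-preserving isometries. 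Lift $G$ to a $\Gamma$-periodic triangulation $\widetilde{G}$ of $X$. Thurston's reformulation is that a $\Gamma$-invariant univalent circle packing on $X$ with combinatorics $\widetilde{G}$ corresponds bijectively to a $\Gamma$-invariant assignment $r \colon V(\widetilde{G}) \to (0,\infty)$ such that at every vertex $v$,
\begin{equation*}
K_v(r) \;:=\; 2\pi \,-\, \sum_{uvw} \theta_v(r_v, r_u, r_w) \;=\; 0,
\end{equation*}
where the sum runs over the triangular faces incident to $v$ and $\theta_v(r_v, r_u, r_w)$ is the angle at the vertex of radius $r_v$ in the constant-curvature triangle with side lengths $r_v + r_u$, $r_v + r_w$, $r_u + r_w$. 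Given such an $r$, one lays out these triangles face-by-face in $X$ to obtain a packing that descends to $S := X/\Gamma$.

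For existence I would exploit the sharp monotonicity of $\theta_v$: in all three geometries $\theta_v$ is strictly decreasing in $r_v$ and strictly increasing in $r_u, r_w$, with the limits $\theta_v \to \pi$ as $r_v \to 0$ and $\theta_v \to 0$ as $r_v \to \infty$ in the hyperbolic and Euclidean cases. For $g \geq 2$ I would produce a solution by the Perron method of Beardon-Stephenson: take the supremum over all $r$ with $K_v(r) \geq 0$ pointwise; the extremum satisfies $K \equiv 0$. An equivalent route is to minimize the explicit convex Colin de Verdière energy whose gradient is precisely $K$. For $g = 0$ this reduces to the classical Koebe-Andreev-Thurston theorem, provable by a topological degree argument on the finite-dimensional radius space after normalizing three circles. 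For $g = 1$ the radii must be coupled with the modulus of the torus, and a similar degree/continuity argument applies. The main obstacle is the $g \geq 2$ case: one must exclude degeneration where radii escape to $0$ or $\infty$ along some $\Gamma$-orbit, which would produce a packing on a non-compact cover rather than on a compact surface of genus exactly $g$. This non-degeneracy is supplied by the extremal-length / harmonic-measure estimates of He-Schramm.

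Uniqueness follows from a maximum principle applied to the ratios $\rho_v = r'_v / r_v$ of two solutions. By $\Gamma$-periodicity, $\rho$ attains its maximum at some vertex $v^\star$ of $G$. At any triangle incident to $v^\star$, the radii ratios at the other two vertices are at most $\rho_{v^\star}$, so monotonicity gives $\theta_{v^\star}(r'_{v^\star}, r'_u, r'_w) \leq \theta_{v^\star}(\rho_{v^\star} r_{v^\star}, \rho_{v^\star} r_u, \rho_{v^\star} r_w)$. In the hyperbolic case, rescaling all three radii by $\rho_{v^\star} > 1$ strictly decreases the angle (larger hyperbolic triangles have smaller angle sums), which would force $K_{v^\star}(r') > K_{v^\star}(r) = 0$, contradicting $K_{v^\star}(r') = 0$; hence $\rho_{v^\star} \leq 1$ and, by the symmetric argument at the minimum of $\rho$, $\rho \equiv 1$. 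In the Euclidean case the scaling is angle-preserving, so equality propagates through the graph and forces $r' = c r$; this ambiguous rescaling corresponds exactly to an automorphism of the flat torus. The spherical case is analogous, with Möbius normalization absorbing the constant. In every case the packing is therefore unique up to the automorphisms of $S$, which is precisely the statement of the theorem.
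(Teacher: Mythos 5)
You should first note that the paper does not prove this statement at all: Theorem \ref{thm:CPT} is invoked as a known result, attributed to Beardon--Stephenson (restricted form) and He--Schramm (full generality), so there is no internal proof to compare against. Your proposal is, in outline, the standard literature route — Thurston's reformulation of the packing as the vanishing of the discrete curvatures $K_v(r)=2\pi-\sum\theta_v$, existence by a Perron/variational argument in the constant-curvature geometry dictated by the genus, and uniqueness by a maximum principle on the ratio of two radius labels, with the Euclidean scaling ambiguity absorbed into automorphisms of the torus. The uniqueness argument you give is essentially correct and complete in spirit (the ratio attains its extremum because it descends to the finite quotient, and the strict monotonicity of hyperbolic angles under simultaneous rescaling does the work).

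The existence half, however, is not self-contained and has one concrete slip. First, the slip: the Perron family you propose, $\{r: K_v(r)\ge 0\ \forall v\}$ (angle sums $\le 2\pi$), is not closed under pointwise suprema — at a vertex where the max equals $r_v$, the neighboring radii only increase, so the angle sum at $v$ can only go up, possibly past $2\pi$. The family that is sup-closed is the opposite one, $K_v(r)\le 0$ (angle sums $\ge 2\pi$), and the Perron argument must be run in that direction. Second, and more importantly, the hard content of the theorem — that the supremal label is finite, positive, and actually satisfies $K\equiv 0$, i.e.\ that radii do not degenerate to $0$ or $\infty$, without any hypothesis on vertex degrees — is exactly what Beardon--Stephenson could only do in a restricted setting and what He--Schramm's paper supplies; you cite their ``extremal-length / harmonic-measure estimates'' rather than proving anything, so your argument bottoms out in the same citation the paper itself uses. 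A related structural point: you fix a deck group $\Gamma$ (hence a conformal structure) before solving, but the conformal structure is part of what must be produced; the clean formulation solves the angle equations abstractly and then builds $S$ by gluing the constant-curvature triangles, recovering $X/\Gamma$ a posteriori, which also makes the $g=1$ ``modulus coupling'' you mention unnecessary. As written, then, the proposal is a correct sketch of the known proof strategy but not a proof: the non-degeneracy step is missing, and that is precisely where the cited literature does its serious work.
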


If $ G $ is embedded in a surface of genus $ g $ but is not fully triangulated, the Riemann surface and the circle packing as guaranteed by the theorem still exist, although they may not be unique \cite{Kelner2006Spec}.

The complex structure on the Riemann surface naturally induces a notion of the angle between two edges of the same face.
For a face with vertices $v, u_i, u_{i+1}$, let $\langle u_i v u_{i+1}\rangle$ denote the angle between the edges $\{v,u_i\}$ and $\{v,u_{i+1}\}$.
Consider a vertex $v$ and all its incident faces.
The angles at $v$ that correspond to these faces form a cyclic sequence: $\langle u_1 v u_2 \rangle, \langle u_2 v u_3 \rangle, \dots, \langle u_\ell v u_1 \rangle$.
The sequence of angles wraps around the vertex  $v$ an integer number of times before returning to $u_1$.
Hence, the total angle $\sum_{i=1}^{\ell} \langle u_i v u_{i+1}\rangle$  equals $2k\pi$ for some integer $k$, where $u_{\ell+1}=u_1$.
When $k\geq 2$, the vertex $v$ is called a \emph{discrete branch point of order $k$}.
If $\mathcal{P}$ is a univalent circle packing on the sphere, the angle sum at every vertex of $A(\mathcal{P})$ is $2\pi$.
For a non-univalent circle packing $\mathcal{P}$, the edges of $A(\mathcal{P})$ may cross, yet $A(\mathcal{P})$ still provides a well-defined triangulation of the surface, although a point on the surface may be covered by more than one triangle (face). In this case, the integer $k$ may exceed $1$.

It is well established that a large portion of Riemann surface theory has discrete counterparts in the study of circle packings. 
Analogous to analytic maps between Riemann surfaces, one can define discrete maps between circle packings that send circles on one surface to circles on another while preserving tangencies.
Analytic maps send infinitesimal circles to infinitesimal circles, whereas circle packing maps send finite circles to finite circles. 
The discrete analogue of branched covering maps in Riemannian geometry sends univalent circle packings on one surface to non-univalent circle packings on another.
However, such discrete maps occur far less frequently than their continuous analogues. 
In particular, given a univalent circle packing $\mathcal{P}$ on a surface $S$ of genus $g$, there does not always exist a non-univalent packing with the same associated graph $A(\mathcal{P})$ on the Riemann sphere $\widehat{\mathbb{C}}$.
The difficulty arises from constraints on branch points. For an analytic map $f: S \to \widehat{\mathbb{C}}$, the branch points are in a highly restricted set. In contrast, for a circle packing map, branch points can occur only at the centers of the circles.
If there is no admissible set of branch points among the centers of the circles, it is very challenging to construct a discrete analytic map from $S$ to $\widehat{\mathbb{C}}$.

To overcome this technical difficulty, we refine the graph $G$ as described in Section \ref{sec:ref}.  
The refinement ensures that the circle packing consists of infinitesimal circles and that the branch points of the map $f$ lie at the centers of some of these circles.

\subsection{Some refinements of graphs}\label{sec:ref}
Let $G$ be a  fully triangulated graph.
Define the \emph{hexagon-subdivision} refinement of $G$ by replacing each triangular face of $G$ with four smaller triangles, as shown in Figure \ref{fig:hex-subdivision}.
This operation produces a new fully triangulated graph, and hence  can be iterated arbitrarily many times.
Denote by $G^{(k)}$ the graph obtained from $G$ after $k$ iterations of the hexagon-subdivision, and by $S^{(k)}$ the Riemann surface on which $G^{(k)}$ is realized a circle packing.
Note that each hexagon-subdivision operation may change the shapes and sizes of the original triangles.
Nevertheless, the sequence of Riemann surfaces $\{S^{(k)}\}$ converges, and the embeddings of the graphs $G^{(k)}$ on these surfaces become stable.
The following convergence result was first established by Bowers and Stephenson \cite{Bowers2004Unifor} and later restated by Kelner \cite{Kelner2006Spec} in the following form.

\begin{lem}\cite[Lemma 5.4]{Kelner2006Spec}\label{lem:con}
Let $G$ be a graph that triangulates a genus $g$ compact Riemann surface without boundary, let $G^{(k)}$ be the result of performing $k$ hexagon-subdivision
on $G$, and let $S^{(k)}$ be the Riemann surface on which $G^{(k)}$ is realized as a circle packing. Further, let $h_k : S^{(k)} \rightarrow S^{(k+1)}$ be the map that takes a triangle to its image under the subdivision procedure by the obvious piecewise-linear map. The sequence of surfaces
$\{S^{(k)}\}$ converges in the moduli space of genus $g$ surfaces, and the sequence of maps $\{h_k\}$ converges to the identity.
\end{lem}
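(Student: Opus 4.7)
The plan is to combine the rigidity of the infinite regular hexagonal packing with compactness in moduli space. Fix the topological surface $S_0$ underlying $G$ and, for each $k$, build the piecewise-linear map $\psi_k: S^{(0)} \to S^{(k)}$ that sends each original triangle of $G$ linearly onto the union of the $4^k$ small triangles of $G^{(k)}$ occupying it in $S^{(k)}$. Then $h_k = \psi_{k+1} \circ \psi_k^{-1}$, so it suffices to show that, after a suitable normalization, $\psi_k$ converges uniformly to a conformal isomorphism whose target is (identified with) $S^{(0)}$.

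The key analytic input is the Rodin-Sullivan hexagonal packing lemma: if a univalent circle packing $\mathcal{Q}$ has tangency graph combinatorially isomorphic to the ball of radius $n$ in the infinite regular hexagonal packing, then after a Möbius normalization each radius of $\mathcal{Q}$ agrees with the corresponding radius of the regular packing up to a factor $1+\epsilon(n)$, with $\epsilon(n)\to 0$. Each hexagon-subdivision introduces only degree-$6$ vertices in the interior of every old triangle, so the ball of combinatorial radius $\sim 2^k$ around a typical new vertex of $G^{(k)}$ is purely hexagonal. Applying the lemma to these balls and transferring the estimate through $\psi_k$ shows that $\psi_k$ is $K_k$-quasiconformal with $K_k\to 1$, with the residual distortion concentrated on shrinking neighborhoods of the finitely many original vertices of $G$ where the degree is not $6$.

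Compactness then closes the argument. After normalizing $S^{(k)}$ to unit area and fixing a marking (for instance, by prescribing the image of a reference triangle of $G$), the moduli space of closed genus $g$ Riemann surfaces is locally compact, and a family of $K_k$-quasiconformal maps with $K_k\to 1$ is precompact in the compact-open topology with every subsequential limit conformal. Any such limit $\psi_\infty:S^{(0)}\to S_\infty$ is therefore a biholomorphism, and by the uniqueness clause of Theorem~\ref{thm:CPT} the marked target $S_\infty$ must coincide with $S^{(0)}$, forcing $\psi_\infty=\mathrm{id}$. Since every subsequence has the same limit, $\psi_k\to\mathrm{id}$, and hence $h_k\to\mathrm{id}$ and $S^{(k)}\to S^{(0)}$ in the moduli space.

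The main obstacle is transferring the planar Rodin-Sullivan estimate to a compact surface whose pre-refinement combinatorics is globally non-hexagonal. I would handle this by passing to the universal cover of $S^{(k)}$, where each hexagonal region around a new vertex lifts to a disk in which the planar rigidity lemma applies verbatim, and by using the ring lemma to produce uniform-in-$k$ lower bounds on the radii of circles surrounding each original vertex of $G$, so that the bounded distortion contributed by the finitely many non-hexagonal centers cannot accumulate and can be absorbed into a neighborhood whose area tends to $0$.
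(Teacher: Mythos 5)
The paper itself offers no proof of this lemma: it is imported verbatim from Kelner (Lemma 5.4), who in turn relies on Bowers--Stephenson, so your proposal has to be measured against that literature. You have correctly identified its toolkit (the Rodin--Sullivan hexagonal packing lemma, quasiconformal distortion estimates, lifting to the universal cover, compactness in moduli space), but the way you deploy it contains a genuine error. The claim that $\psi_k\colon S^{(0)}\to S^{(k)}$ is $K_k$-quasiconformal with $K_k\to 1$ is false in general: the packing lemma forces the small packing triangles of $G^{(k)}$ deep inside a refined face to be nearly \emph{equilateral}, while the corresponding face of $G$ on $S^{(0)}$, which you subdivide linearly, is a geodesic triangle whose shape is generally far from equilateral. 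Hence on such a face the dilatation of $\psi_k$ tends to that of the affine map from the original triangle to an equilateral one, which stays bounded away from $1$. Consequently the identification of the limit surface with $S^{(0)}$ cannot work, and the appeal to the uniqueness clause of Theorem~\ref{thm:CPT} does not help: that clause concerns packings of the fixed graph $G$ on a fixed surface and says nothing about limits of piecewise-linear maps. The true limit of $\{S^{(k)}\}$ is the piecewise-equilateral (Belyi-type) surface $S^{\mathrm{eq}}$ obtained by declaring every face of $G$ a Euclidean equilateral triangle; this structure is invariant under hexagon-subdivision, which is exactly why it is the correct comparison object, and it differs from $S^{(0)}$ in general (if the circle-packing surface already realized the equilateral structure, the whole refinement scheme of Bowers--Stephenson would be unnecessary). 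Since your convergence argument rests on every subsequential limit being $S^{(0)}$, it does not establish the stated convergence.

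A second, smaller inaccuracy: the ring lemma cannot give ``uniform-in-$k$ lower bounds on the radii'' of the circles around the original vertices, since all radii tend to $0$ as $k\to\infty$; it gives bounds on \emph{ratios} of adjacent radii, which, together with the fact that the non-hexagonal vertices are finitely many, is what confines the non-vanishing distortion to neighborhoods of those vertices of vanishing area. The repair is to replace $S^{(0)}$ by $S^{\mathrm{eq}}$ as the fixed reference surface: show that the map from $S^{\mathrm{eq}}$ to $S^{(k)}$ that is affine on each small equilateral triangle onto the corresponding packing triangle is $K_k$-quasiconformal with $K_k\to 1$ off shrinking neighborhoods of the original vertices; your compactness argument then yields $S^{(k)}\to S^{\mathrm{eq}}$ in moduli space and $h_k\to\mathrm{id}$, which is precisely the assertion of the lemma.
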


Lemma \ref{lem:con} implies that, in the limit, the circle packing for $G^{(\infty)}$ on $S^{(\infty)}$ consists of circles with arbitrarily small radii.

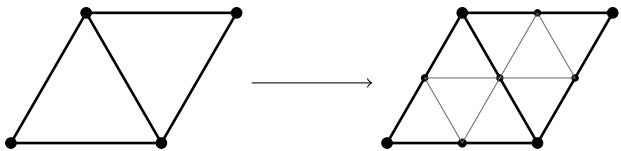
\begin{figure}[H]
    \centering
\begin{tikzpicture}[scale=1]
  \foreach \i in {0,2} {
    \filldraw[black] (\i,0) circle (2pt);
  }
  \foreach \i in {1,3} {
    \filldraw[black] (\i,1.732) circle (2pt);
  }

\draw[line width=1pt] (0,0) -- (2,0) -- (3,1.732) -- (1,1.732) -- cycle;
 \draw[line width=1pt]  (2,0)  -- (1,1.732);

\foreach \i in {5,7} {
    \filldraw[black] (\i,0) circle (2pt);
}

\foreach \i in {6,8} {
    \filldraw[black] (\i,1.732) circle (2pt);
}

\filldraw[black] (6,0) circle (1.5pt);
\filldraw[black] (7,1.732) circle (1.2pt);

\foreach \i in {5.5,6.5,7.5} {
    \filldraw[black] (\i,0.866) circle (1.2pt);
}

\draw[line width=1pt] (5,0) -- (7,0) -- (8,1.732) -- (6,1.732) -- cycle;
\draw[line width=1pt] (7,0) -- (6,1.732);

\draw[line width=0.1pt, black!60]
    (6.5,0.866) -- (5.5,0.866) -- (6,0)
    -- (6.5,0.866) -- (7.5,0.866)
    -- (7,1.732) -- (6.5,0.866);
    
\draw[->] (3.2,0.8) -- (4.8,0.8);
\end{tikzpicture}
    \caption{The hexagon-subdivision refinement of two triangle.}
    \label{fig:hex-subdivision}
\end{figure}

A graph $H$ is said to contain $G$ as a \emph{$(\xi,\ell)$-immersion} if $H$ can be obtained by replacing each edge of $G$ with a path of length at most $\ell$ connecting its endpoints, in such a way that each edge of $H$ is used by at most $\xi$ of these paths.
Note that when $\xi = 1$, the paths are edge-disjoint, thus recovering the usual notion of immersion.
We present a useful comparison result for Steklov eigenvalues for graphs that contain $G$ as a $(\xi,\ell)$-immersion.
\begin{thm}\label{thm:sd}
Let $G$ be a graph with boundary $\delta \Omega$.
If $H$ contains $G$ as a $(\xi,\ell)$-immersion, then
\begin{equation*}
  \lambda_k(G,\delta \Omega) \leq \xi \ell \lambda_k(H, \delta \Omega)
\end{equation*}
 for $k=1,\ldots,|\delta \Omega|$.
\end{thm}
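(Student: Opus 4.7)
The plan is to use the variational characterization \eqref{eq:Rq1} of $\lambda_k$. Note that the $(\xi,\ell)$-immersion implicitly includes $V(G)\subseteq V(H)$ (the endpoints of the replacing paths are the original vertices), so $\delta\Omega\subseteq V(G)\subseteq V(H)$. The strategy is to take an optimal $k$-dimensional subspace of eigenfunctions on $H$, restrict it to $V(G)$, and show that this gives a $k$-dimensional subspace on $G$ whose Rayleigh quotient is at most $\xi\ell$ times the one on $H$.

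The key pointwise estimate is a Cauchy--Schwarz bound along immersion paths. For each edge $\{x,y\}\in E(G)$ fix a path $x=v_0,v_1,\ldots,v_m=y$ in $H$ with $m\le \ell$. For any $f:V(H)\to \mathbb{R}$, telescoping and Cauchy--Schwarz give
\begin{equation*}
(f(x)-f(y))^2 \;\le\; m\sum_{i=0}^{m-1}(f(v_i)-f(v_{i+1}))^2 \;\le\; \ell\sum_{i=0}^{m-1}(f(v_i)-f(v_{i+1}))^2.
\end{equation*}
Summing over $\{x,y\}\in E(G)$ and exchanging the order of summation, each edge of $H$ is counted at most $\xi$ times by the immersion hypothesis, so
\begin{equation*}
\sum_{\{x,y\}\in E(G)}(f(x)-f(y))^2 \;\le\; \xi\ell\sum_{\{a,b\}\in E(H)}(f(a)-f(b))^2.
\end{equation*}

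Next, let $W_H\subset \mathbb{R}^{V(H)}$ be the span of the first $k$ Steklov eigenfunctions of $(H,\delta\Omega)$, which achieves $\max_{f\in W_H} R_H(f)=\lambda_k(H,\delta\Omega)$. Every element of $W_H$ is harmonic on $V(H)\setminus \delta\Omega$. Consider the restriction map $\pi:W_H\to\mathbb{R}^{V(G)}$, $f\mapsto f|_{V(G)}$. If $\pi(f)=0$ then $f|_{\delta\Omega}=0$, and by the uniqueness of the harmonic extension on $H$ we get $f\equiv 0$; hence $\pi$ is injective and $W_G:=\pi(W_H)$ has dimension $k$.

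Finally, for $g=\pi(f)\in W_G$ the boundary sum is unchanged ($\delta\Omega\subseteq V(G)$), so combining with the display above,
\begin{equation*}
R_G(g)=\frac{\sum_{E(G)}(g(x)-g(y))^2}{\sum_{x\in\delta\Omega}g(x)^2}\;\le\;\xi\ell\cdot\frac{\sum_{E(H)}(f(a)-f(b))^2}{\sum_{x\in\delta\Omega}f(x)^2}=\xi\ell\, R_H(f).
\end{equation*}
Applying \eqref{eq:Rq1} to $(G,\delta\Omega)$ with the subspace $W_G$ yields $\lambda_k(G,\delta\Omega)\le \xi\ell\,\lambda_k(H,\delta\Omega)$. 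The only subtle step, and the one I expect is the main obstacle, is justifying that $\pi$ produces a genuine $k$-dimensional subspace on which $R_G$ is finite; this is precisely why one must work with the harmonic eigenspace on $H$ rather than an arbitrary min-max witness, since a generic $k$-dimensional subspace of $\mathbb{R}^{V(H)}$ could collapse upon restriction to $V(G)$.
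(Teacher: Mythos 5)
Your proposal is correct and follows essentially the same route as the paper: the heart of the argument is the identical Cauchy--Schwarz estimate along the immersion paths together with the multiplicity bound $\xi$, followed by the variational characterization \eqref{eq:Rq1}. The only difference is bookkeeping: the paper plugs the energy comparison directly into the min-max chain (implicitly identifying test functions on $G$ and $H$), whereas you transfer an explicit test subspace by restricting the span of the first $k$ Steklov eigenfunctions of $H$ and check injectivity via harmonicity, which is a slightly more careful treatment of a point the paper glosses over (and, as stated, uses that every component of $H$ meets $\delta\Omega$).
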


In the following, we sometimes consider a graph $H$ that contains a graph $\widehat{G}$, which is isomorphic to $G$ but has a different vertex set, as a $(\xi,\ell)$-immersion. 
For simplicity, we still say that $H$ contains $G$ as a $(\xi,\ell)$-immersion.  
In this situation, $\delta \Omega$ is no longer a subset of $V(H)$.
However, there exists a subset of $V(H)$ that corresponds to $\delta \Omega$, and for convenience, we will use the same notation $\delta \Omega$ to denote it.

\begin{lem}\label{Lem:H}
Let $G$ be a fully triangulated graph of bounded-degree with boundary $\delta \Omega$.
Then there exist a subgraph $H$ and a boundary set $\delta\Omega^{(k)}$ of $G^{(k)}$ such that $H$ contains $G$ as an $(O(1),O(2^k))$-immersion,  $|\delta\Omega^{(k)}|=\Theta(4^k)|\delta \Omega|$ and $$\lambda_2(H, \delta \Omega) \leq O(2^k) \lambda_2(G^{(k)}, \delta\Omega^{(k)}).$$
\end{lem}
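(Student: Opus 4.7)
The plan is to define $\delta\Omega^{(k)}$ as the "thickened" boundary in $G^{(k)}$: specifically, the set of all vertices of $G^{(k)}$ that lie in some triangular face of $G$ incident to a vertex of $\delta\Omega$. Because $G$ is bounded-degree and each face of $G$ contains $\Theta(4^k)$ vertices of $G^{(k)}$ after $k$ iterations of hexagon subdivision, this gives $|\delta\Omega^{(k)}| = \Theta(4^k)|\delta\Omega|$. For each $x \in \delta\Omega$, write $N(x) \subset \delta\Omega^{(k)}$ for the "cluster" of $G^{(k)}$-vertices lying in faces of $G$ incident to $x$. The subgraph $H$ is then obtained from $G^{(k)}$ by contracting each cluster $N(x)$ to its root $x$; formally, $V(H) = \delta\Omega \sqcup (V(G^{(k)})\setminus \delta\Omega^{(k)})$ with edges inherited from $G^{(k)}$ under the quotient.

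For the immersion claim, each edge $\{x,y\} \in E(G)$ becomes a subdivision path of length $2^k$ in $G^{(k)}$; upon cluster contraction, this path descends to a path of length at most $2^k$ in $H$ between $x$ and $y$. Since subdivision paths in $G^{(k)}$ are edge-disjoint and, at any boundary vertex $x$, paths for distinct incident $G$-edges exit $N(x)$ through distinct $G^{(k)}$-edges, the resulting paths in $H$ are edge-disjoint, giving a $(1, 2^k)$-immersion.

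For the eigenvalue bound, take a Steklov eigenfunction $\tilde f$ attaining $\mu = \lambda_2(G^{(k)}, \delta\Omega^{(k)})$ in the Rayleigh characterization \eqref{eq:Ray}, with $\tilde f \perp \mathbf{1}_{\delta\Omega^{(k)}}$. I build a test function $g : V(H) \to \mathbb{R}$ by setting $g(v) = \tilde f(v)$ for $v \in V(H)\setminus\delta\Omega$ and $g(x) = c(x) := |N(x)|^{-1}\sum_{u \in N(x)}\tilde f(u)$ for $x \in \delta\Omega$. The numerator $E_H(g)$ is controlled by expanding each contracted edge via $(a-b)^2 \leq O(1)((a-c(x))^2 + (c(x)-c(y))^2 + (c(y)-b)^2)$ and applying a Poincare inequality on each cluster $N(x)$. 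The denominator $\sum_{\{x,y\}\subset\delta\Omega}(c(x)-c(y))^2$ is lower-bounded through the identity $\sum_{u \in \delta\Omega^{(k)}}\tilde f(u)^2 = \sum_x \operatorname{Var}_{N(x)}(\tilde f) + |N(x)|\sum_x c(x)^2$ together with cluster-size bookkeeping. Combining these with $|N(x)| = \Theta(4^k)$ and $|\delta\Omega^{(k)}|/|\delta\Omega| = \Theta(4^k)$ gives $\lambda_2(H, \delta\Omega) \leq R_H(g) \leq O(2^k)\mu$, as required.

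The main obstacle is the delicate balance of constants in the final estimate. A naive Poincare constant of $O(4^k)$ on a cluster of diameter $O(2^k)$, together with the $|N(x)|^2$ averaging penalty in the denominator, yields only a loose bound of order $O(16^k)$. Recovering the claimed $O(2^k)$ factor requires sharper structural input: one must exploit the fact that the $k$-fold hexagon refinement of a single triangular region of $G$ has the geometry of a planar $2^k \times 2^k$ triangular grid (whose Poincare/Hardy-type constants are strictly better than those of a generic bounded-degree graph of comparable diameter), and moreover that $\tilde f$ is harmonic on the interior $V(G^{(k)}) \setminus \delta\Omega^{(k)}$, which constrains its fluctuations inside each cluster. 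This coupling of the spectral estimate with the geometry of the refinement is the technical heart of the lemma.
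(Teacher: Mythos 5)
Your construction does not establish the inequality that is the whole content of the lemma, and you say so yourself: your own accounting yields only $O(16^k)$ in place of $O(2^k)$, and the step that would recover the correct factor is deferred to unspecified ``sharper structural input.'' That missing step cannot be patched within your framework. The reason is structural: your $H$ is the quotient of $G^{(k)}$ obtained by contracting the boundary clusters, so its Dirichlet energy retains essentially the \emph{entire} edge energy of $G^{(k)}$, while its boundary has shrunk from $\Theta(4^k)|\delta\Omega|$ vertices to $|\delta\Omega|$ vertices. Even in the ideal situation where the eigenfunction is exactly constant on each cluster (so every Poincar\'e/variance term vanishes), comparing the pairwise-difference denominators costs a factor $\Theta(r^4)$ with $r=2^k$, and after multiplying by $|\delta\Omega|/|\delta\Omega^{(k)}|=\Theta(1/r^2)$ in \eqref{eq:Ray} you are left with $O(4^k)$, not $O(2^k)$. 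Plugged into the chain $\lambda_2(G,\delta\Omega)\le O(2^k)\lambda_2(H,\delta\Omega)$ from Theorem \ref{thm:sd}, this is too lossy to give Lemma \ref{lem:main} and hence the main theorem. The appeal to harmonicity does not help either: the clusters $N(x)$ sit inside $\delta\Omega^{(k)}$, where the eigenfunction satisfies the Steklov condition rather than the harmonicity equation, so there is no a priori control of its fluctuation there. Also, as a minor point, your $H$ is a contraction, not a subgraph of $G^{(k)}$ as the lemma asserts.

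The idea your proposal is missing is \emph{sparsification by randomization}, which is how the paper gains the extra factor of $1/r$. There, $H$ is a random \emph{sparse} subgraph of $G^{(k)}$: one picks a uniformly random representative $\pi_V(v)$ in each cluster (so each boundary vertex of $G^{(k)}$ is used with probability $\Theta(1/r^2)$), and for each edge of $G$ one builds a random path of length $O(r)$ through the $O(r)\times O(r)$ grids formed by adjacent subdivided triangles, choosing random waypoints and a random row-first/column-first route. This makes the expected multiplicity of any fixed edge of $G^{(k)}$ in $H$ equal to $O(1/r)$, so the expected numerator is only an $O(1/r)$ fraction of the energy of $G^{(k)}$, while the expected boundary denominator is $\Theta(1/r^4)$ times that of $\delta\Omega^{(k)}$; a first-moment argument then produces a single $H$ with ratio $O(r^3)$ times that of $G^{(k)}$, and \eqref{eq:Ray} converts this into $\lambda_2(H,\delta\Omega)\le O(r)\lambda_2(G^{(k)},\delta\Omega^{(k)})$. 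Without some analogue of this edge-sparsity gain, a deterministic contraction/averaging argument of the kind you outline cannot reach the stated $O(2^k)$ bound.
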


From Theorem \ref{thm:sd} and Lemma \ref{Lem:H}, it follows that there exists a graph $H$ such that  
$$|\delta \Omega| \lambda_2(G, \delta \Omega) \leq O(2^k)|\delta \Omega|\lambda_2(H, \delta \Omega) \leq O(4^k)|\delta \Omega| \lambda_2(G^{(k)}, \delta\Omega^{(k)}).$$
Then we get the following result, which shows that it suffices to prove Theorem \ref{thm:main} for an appropriate hexagonal-subdivision of the original graph $G$.

\begin{lem}\label{lem:main}
Let $G$ be a fully triangulated graph of bounded-degree with boundary $\delta \Omega$.
Then there exist a boundary set $\delta\Omega^{(k)}$ of $G^{(k)}$ and a constant $C$ such that $$|\delta \Omega| \lambda_2(G, \delta \Omega) \leq C|\delta\Omega^{(k)}| \lambda_2(G^{(k)}, \delta\Omega^{(k)}).$$
\end{lem}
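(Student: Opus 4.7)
The plan is to deduce Lemma \ref{lem:main} as an immediate corollary of Theorem \ref{thm:sd} and Lemma \ref{Lem:H}. No new geometric construction is required, since the heavy lifting has been offloaded to those two earlier results; what remains is an inequality chase combined with the observation that the boundary of $G^{(k)}$ scales in a predictable way with $k$.

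Concretely, I would first apply Lemma \ref{Lem:H} to obtain a graph $H$ together with a boundary set $\delta\Omega^{(k)} \subset V(G^{(k)})$ such that $H$ contains $G$ as an $(O(1), O(2^k))$-immersion, $|\delta\Omega^{(k)}| = \Theta(4^k)|\delta\Omega|$, and $\lambda_2(H,\delta\Omega) \leq O(2^k)\,\lambda_2(G^{(k)},\delta\Omega^{(k)})$. Next I would plug $\xi = O(1)$ and $\ell = O(2^k)$ into Theorem \ref{thm:sd} (applied at $k=2$) to get $\lambda_2(G,\delta\Omega) \leq O(2^k)\,\lambda_2(H,\delta\Omega)$. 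Chaining the two estimates yields
\begin{equation*}
\lambda_2(G,\delta\Omega) \;\leq\; O(4^k)\,\lambda_2(G^{(k)},\delta\Omega^{(k)}).
\end{equation*}
Multiplying both sides by $|\delta\Omega|$ and using $|\delta\Omega^{(k)}| = \Theta(4^k)|\delta\Omega|$ to rewrite the factor $O(4^k)|\delta\Omega|$ as $O(|\delta\Omega^{(k)}|)$ gives exactly the inequality $|\delta\Omega|\lambda_2(G,\delta\Omega) \leq C|\delta\Omega^{(k)}|\lambda_2(G^{(k)},\delta\Omega^{(k)})$ claimed in the statement.

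Since this argument is merely the composition of two earlier inequalities with an arithmetic rewriting, there is essentially no obstacle at this step: the real substance is buried in Lemma \ref{Lem:H}, where one must actually construct $H$ by lifting edges of $G$ to subdivision paths in $G^{(k)}$ and then relate $\lambda_2(H,\delta\Omega)$ to $\lambda_2(G^{(k)},\delta\Omega^{(k)})$. The only minor care needed here is to verify that the constants absorbed into the various $O(\cdot)$'s depend only on the absolute degree bound and on the immersion parameters coming from Lemma \ref{Lem:H}, so that the final constant $C$ in the statement is genuinely independent of $k$, of $G$, and of the choice of $\delta\Omega$.
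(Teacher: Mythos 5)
Your proposal is correct and matches the paper's own derivation exactly: the paper obtains Lemma \ref{lem:main} precisely by chaining Theorem \ref{thm:sd} (with $\xi=O(1)$, $\ell=O(2^k)$, eigenvalue index $2$) with the estimate of Lemma \ref{Lem:H}, and then absorbing the factor $O(4^k)|\delta\Omega|$ into $O(|\delta\Omega^{(k)}|)$ via $|\delta\Omega^{(k)}|=\Theta(4^k)|\delta\Omega|$. Your remark that the implicit constants depend only on the degree bound (hence $C$ is independent of $k$) is exactly the point needed for the later use of the lemma with $k$ large.
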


\section{Proofs}\label{sec:proof}

In this section, we provide all the proofs appearing in this paper.

\begin{proof}[Proof of Theorem \ref{thm:sd}]
Recall that $H$ contains $G$ as a $(\xi,\ell)$-immersion. It implies that for every edge $\{x,y\} \in E(G)$, there exists a corresponding path $P_{xy}$ in $H$ of length at most $\ell$.
By the Cauchy-Schwarz Inequality, we have
\begin{equation*}
   (f(x)-f(y))^2\leq \ell \sum_{\{u,v\} \in E(P_{xy})}(f(u)-f(v))^2
\end{equation*}
for any real function $f$. 
Since each edge of $H$ is used by at most $\xi$ of these paths, we get
\begin{align*}
   \sum_{\{x,y\} \in E(G)}(f(x)-f(y))^2&\leq \ell \sum_{\{x,y\} \in E(G)}\sum_{\{u,v\} \in E(P_{xy})}(f(u)-f(v))^2\\
   &\leq \ell\xi \sum_{\{x,y\} \in E(H)}(f(x)-f(y))^2.
\end{align*}
It follows from \eqref{eq:Rq1} that
\begin{align*}
  \lambda_k(G,\delta \Omega)
  &= \min\limits_{\substack{W\subset \mathbb{R}^{V}, \dim W = k-1, \\ W\perp \mathbf{1}_{\delta \Omega}}}
     \max\limits_{f \in W} \frac{\sum_{\{x,y\} \in E(G)}(f(x)-f(y))^2}{\sum_{x\in \delta \Omega}f^2(x)}\\
    &\leq \min\limits_{\substack{W\subset \mathbb{R}^{V}, \dim W = k-1, \\ W\perp \mathbf{1}_{\delta \Omega}}}
     \max\limits_{f \in W} \frac{\ell\xi\sum_{\{x,y\} \in E(H)}(f(x)-f(y))^2}{\sum_{x\in \delta \Omega}f^2(x)}\\
    &=\ell\xi\lambda_k(H,\delta \Omega).
\end{align*}
\end{proof}

Lemma \ref{Lem:H} is a key lemma in this paper. Its proof starts by constructing a boundary set $\delta\Omega^{(k)}$ of $G^{(k)}$ such that $|\delta\Omega^{(k)}| = \Theta(4^k) |\delta\Omega|$. 
Next, a random subgraph $H$ of $G^{(k)}$ with boundary $\delta \Omega$ is constructed in a specific manner, ensuring that $H$ contains $G$ as an $(O(1), O(2^k))$-immersion.  
By analyzing the expectation of the Steklov eigenvalues of the random subgraph $H$, we  conclude that there exists a certain $H$ such that $\lambda_2(H, \delta\Omega) \le O(2^k)\, \lambda_2(G^{(k)}, \delta\Omega^{(k)})$.

\begin{proof}[Proof of Lemma \ref{Lem:H}]
Let $V(G)=\{v_1,\dots,v_n\}$.
We define a partition $\{V^{(k)}_1,\dots,V^{(k)}_n\}$ of $V(G^{(k)})$ by assigning each vertex of $G^{(k)}$ to the subset $V^{(k)}_i$ corresponding to its nearest vertex $v_i \in V(G)$, with ties broken arbitrarily. Note that $v_i \in V^{(k)}_i$ for all $i$.
Let $r=2^k$. The construction of $G^{(k)}$ ensures that $|V^{(k)}_i|=\Theta(r^2)$ for each $i$.
The boundary of $G^{(k)}$ is defined as
$$\delta\Omega^{(k)}:=\bigcup_{i:v_i\in \delta \Omega}{V^{(k)}_i},$$
which satisfies $|\delta\Omega^{(k)}|=\Theta(r^2)|\delta \Omega|$.

Next, we construct a random subgraph $H$ of $G^{(k)}$ that contains $G$ as a $(\xi,\ell)$-immersion.
For each $i$, we randomly select a vertex from $V^{(k)}_i$ to serve as a representative of $v_i$, denoted by $\pi_V(v_i)$.
Since $|V^{(k)}_i| = \Theta(r^2)$ for each $i$, each vertex is chosen with probability $1/\Theta(r^2)$.

For each edge $\{v, u\} \in E(G)$, we construct a random path in $G^{(k)}$ between $\pi_V(v)$ and $\pi_V(u)$ as follows.
Let $T_1$ and $T_2$ be the two triangles (faces) in $G$ sharing the edge $\{v, u\}$.
For a triangle $T$ in $G$, let $T^{(k)}$ denote the set of all small triangles in $G^{(k)}$ obtained by applying the hexagon-subdivision to $T$.
We choose a vertex $x$ uniformly at random from $T_1^{(k)} \cup T_2^{(k)}$, then construct random paths from $\pi_V(v)$ to $x$ and from $\pi_V(u)$ to $x$, which we call the two  \emph{initial-paths}.  
The desired path is then obtained by joining the initial-paths at $x$, while removing any duplicate edges that may appear.

With loss of generality, we just describe the construction of the path from $\pi_V(v)$ to $x$.
Let $\mathcal{T}(v)$ be the set of all triangles in $G$ that contain $v$.
Let $T_{\pi}$ and $T_x$ denote the triangles of $G$ such that $\pi_V(v)$ lies in $T_{\pi}^{(k)}$ and $x$ lies in $T_x^{(k)}$, respectively.
Note that both $T_{\pi}$ and $T_x$ belong to $\mathcal{T}(v)$. The triangles in $\mathcal{T}(v)$ have a natural cyclic order around $v$, which defines two sequences between $T_{\pi}$ and $T_x$: one clockwise and one counterclockwise.
We select the shorter sequence, denoted by $T_0, T_1, \ldots, T_{\ell+1}$, breaking ties randomly,  where $T_0 = T_{\pi}$ and $T_{\ell+1} = T_x$.
All triangles in this sequence are distinct unless $T_{\pi}= T_x$.
 
We now select random vertices $a_i \in V(G^{(k)})$ from each $T_i^{(k)}$ for $i=1,\ldots,\ell$, and set $a_0 = \pi_V(v)$ and $a_{\ell+1} = x$.
To connect $a_i$ to $a_{i+1}$: if $T_i \neq T_{i+1}$, we observe that $T_i^{(k)}$ and $ T_{i+1}^{(k)}$ together form an $(r+1) \times (r+1)$ grid, see Figure \ref{fig:grid}. There are two types of paths from $a_i$ to $a_{i+1}$: first moving horizontally along the row direction and then vertically along the column direction, or vice versa. Randomly choose one of these two paths. If two vertices lie in the same row or column, they are connected directly along that row or column; if $T_{i} = T_{i+1}$, we randomly select an adjacent triangle of $T_{i}$ to form the grid and then apply the same path construction. Connecting these segment paths yields the initial-path from $\pi_V(v)$ to $x$, see Figure \ref{fig:initialpath} for example.

Using the initial-paths from $\pi_V(v)$ and $\pi_V(u)$ to $x$, one can naturally derive a random path $\pi_P(v,u)$ connecting $\pi_V(v)$ and $\pi_V(u)$.
Applying this procedure independently to each edge of $G$ yields a subgraph $H$ of $G^{(k)}$, which contains $G$ as $(\xi,\ell)$-immersion. Since $G$ has bounded degree, each initial-path has length $O(r)$, which implies that $\ell = O(r)$. For any triangle $T$ in $G$, each initial-path using edges of $T^{(k)}$ has an endpoint in the set $\{\pi_V(v): v \in V(T)\}$. Note that the number of initial-paths with endpoint $\pi_V(v)$ is at most the degree of $v\in V(G) $. Hence, each edge of $T^{(k)}$ is used by at most $O(1)$ initial-paths, which implies that $\xi = O(1)$.

Recall that each initial-path is constructed within an $O(r) \times O(r)$ grid and uses $O(r)$ edges of the grid.
Hence, the expected number of times that a given edge of $G^{(k)}$ occurs in the random graph $H = (\pi_V, \pi_P)$ is $O\left(\frac{1}{r}\right)$. Then we have 
\begin{equation}\label{eq:edge}
   \mathbb{E}[\sum_{\{x,y\}\in E(H)}(f(x)-f(y))^2]\leq O\left(\frac{1}{r}\right)\sum_{\{x,y\}\in E(G^{(k)})}(f(x)-f(y))^2
\end{equation}
for any real function $f$. Recall that each vertex in $\delta\Omega^{(k)}$ is chosen with probability $\Theta(1/r^2)$, then we have 
\begin{equation}\label{eq:vertex}
   \mathbb{E}[\sum_{\{x,y\}\subset \delta \Omega}(f(x)-f(y))^2]=\Theta\left(\frac{1}{r^4}\right)\sum_{\{x,y\}\subset \delta\Omega^{(k)}}(f(x)-f(y))^2.
\end{equation}
From \eqref{eq:edge} and \eqref{eq:vertex}, 
it follows that there exists some choice of $H=(\pi_V, \pi_P)$ such that 
\begin{equation*}
   \frac{\sum_{\{x,y\}\in E(H)}(f(x)-f(y))^2}{\sum_{\{x,y\}\subset \delta \Omega}(f(x)-f(y))^2}\leq O\left(r^3\right)\frac{\sum_{\{x,y\}\in E(G^{(k)})}(f(x)-f(y))^2}{\sum_{\{x,y\}\subset \delta\Omega^{(k)}}(f(x)-f(y))^2}.
\end{equation*}
By \eqref{eq:Ray}, we obtain that  
\begin{align*}
\lambda_2(H,\delta \Omega)
  &=|\delta \Omega|\min_f\frac{\sum_{\{x,y\} \in E(H)}(f(x)-f(y))^2}{\sum_{\{x,y\}\subset \delta \Omega}(f(x)-f(y))^2}\\
  &\leq \frac{|\delta \Omega|}{|\delta\Omega^{(k)}|}O\left(r^3\right)\lambda_2(G^{(k)}, \delta\Omega^{(k)})\\
  &=O(r)\lambda_2(G^{(k)}, \delta\Omega^{(k)}).
\end{align*}
\end{proof}
\begin{figure}[htbp]
    \centering
     \begin{tikzpicture}[scale=0.40]
  \begin{scope}[xslant=-0.5]
    \foreach \x in {1,...,8} {
      \draw[line width=0.05pt, black!30] (\x,0) -- (\x,9);
    }
    \foreach \y in {1,...,8} {
      \draw[line width=0.05pt, black!30] (0,\y) -- (9,\y);
    }

    \foreach \i in {0,...,8} {
      \foreach \j in {0,...,8} {
        \draw[line width=0.05pt, black!30] (\i,\j) -- (\i+1,\j+1);
      }
    }

    \draw[line width=1.5pt] (0,0) rectangle (9,9);

    \draw[line width=0.6pt] (0,0) -- (9,9);

    \node[below left] at (0,0) {$0$};
    \node[above left] at (0,9) {$r$};
    \node[below right] at (9,0) {$r$};

    \draw[black, line width=1.2pt] (2,6) -- (2,2) -- (7,2);
  \end{scope}
 
  \pgfmathsetmacro{\ax}{2 - 0.5*6}
  \pgfmathsetmacro{\ay}{6}
  \pgfmathsetmacro{\bx}{7 - 0.5*2}
  \pgfmathsetmacro{\by}{2}

  \filldraw[black] (\ax,\ay) circle (4pt) node[above right] {\small{$a_i$}};
  \filldraw[black] (\bx,\by) circle (4pt) node[below] {\small{$a_{i+1}$}};

\end{tikzpicture}
    \caption{The grid graph induced by two adjacent triangles.}
    \label{fig:grid}
\end{figure}
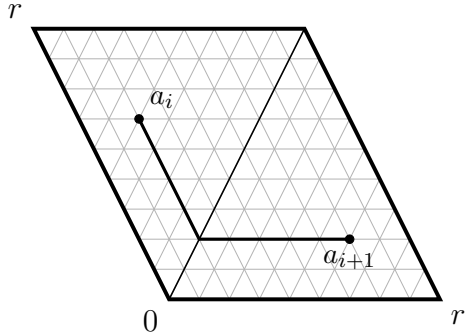
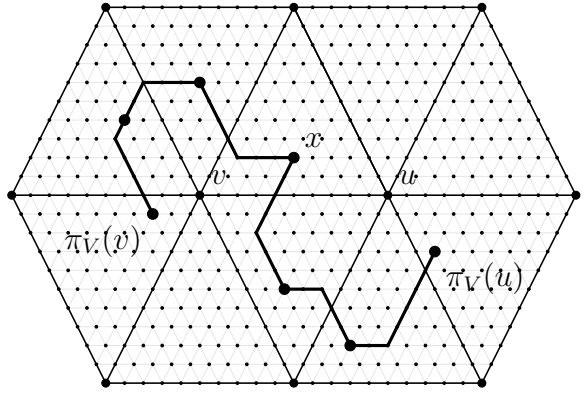
\begin{figure}[htbp]
    \centering
    \begin{tikzpicture}[scale=0.25]
  \begin{scope}[xslant=-0.5]
    \coordinate (A) at (0,0);
    \coordinate (B) at (10,0);
    \coordinate (C) at (10,10);
    \coordinate (D) at (0,10);
\coordinate (E) at (20,10);
\coordinate (F) at (20,20);
\coordinate (G) at (10,20);
\coordinate (H) at (30,20);
\coordinate (I) at (30,10);
\coordinate (J) at (20,0);

\draw[line width=0.6pt] (A) -- (B) -- (C) -- (D) -- cycle;
\draw[line width=0.6pt] (A) --(C) -- (E) -- (B);
\draw[line width=0.6pt] (C) -- (F) -- (E);
\draw[line width=0.6pt] (C) -- (G) -- (F);
\draw[line width=0.6pt] (D) -- (G);
\draw[line width=0.6pt] (E) -- (F) -- (H);
\draw[line width=0.6pt] (E) -- (H) -- (I);
\draw[line width=0.6pt] (E) -- (I) -- (J);
 \draw[line width=0.61pt] (E) -- (J) -- (B);

\foreach \x in {1,...,9} {
  \draw[line width=0.1pt, gray!20] (\x,0) -- (\x,10+\x);
    \draw[line width=0.1pt, gray!20] (10+\x,0) -- (10+\x,20);
      \draw[line width=0.1pt, gray!20] (20+\x,\x) -- (20+\x,20);
}

\foreach \y in {1,...,9} {
  \draw[line width=0.1pt, gray!20] (0,\y) -- (20-\y,20);
  \draw[line width=0.1pt, gray!20] (\y,0) -- (20+\y,20);
    \draw[line width=0.1pt, gray!20] (10+\y,0) -- (30,20-\y);
}

\foreach \z in {1,...,9} {
	\draw[line width=0.1pt, gray!20] (0,\z) -- (20+\z,\z);
		\draw[line width=0.1pt, gray!20] (\z,10+\z) -- (30,10+\z);
}

    \draw[line width=1.2pt] (7,9) -- (7,13) -- (8,14) -- (10,16) -- (13,16) --(13,12) -- (16,12) -- (12,8) -- (12,5) -- (14,5) -- (14,2) -- (16,2) -- (21,7);
    
  \end{scope}
  
    \foreach \i in {0,...,10} {
      \pgfmathtruncatemacro{\jmax}{10+\i}
    \foreach \j in {0,...,\jmax} {
    \pgfmathsetmacro{\ii}{\i - 0.5*\j}
    \pgfmathsetmacro{\jj}{\j}
    \filldraw[black] (\ii,\jj) circle (2pt);
  }
  }
 
    \foreach \i in {11,...,20} {
	\foreach \j in {0,...,20} {
		\pgfmathsetmacro{\ii}{\i - 0.5*\j}
		\pgfmathsetmacro{\jj}{\j} 		\filldraw[black] (\ii,\jj) circle (2pt);
	}
}
  
      \foreach \i in {21,...,30} {
  	      \pgfmathtruncatemacro{\jmin}{\i-20}
  	    \foreach \j in {\jmin,...,20} {
  		    \pgfmathsetmacro{\ii}{\i - 0.5*\j}
  		    \pgfmathsetmacro{\jj}{\j}
  		    \filldraw[black] (\ii,\jj) circle (2pt);
  		  }
  	  }
  
    \foreach \x/\y/\name in {
    0/0/A, 10/0/B,0/10/D, 20/20/F, 10/20/G, 30/20/H,
    30/10/I, 20/0/J
  }{
    \pgfmathsetmacro{\xx}{\x - 0.5*\y}
    \pgfmathsetmacro{\yy}{\y}
    \filldraw[black] (\xx,\yy) circle (6pt) ;
  }
    \pgfmathsetmacro{\cx}{10- 0.5*10}
    \pgfmathsetmacro{\cy}{10}
    \filldraw[black] (\cx,\cy) circle (6pt) node[above right] {$v$};
      \pgfmathsetmacro{\ex}{20- 0.5*10}
    \pgfmathsetmacro{\ey}{10}
    \filldraw[black] (\ex,\ey) circle (6pt) node[above right] {$u$};
 
\pgfmathsetmacro{\xx}{7- 0.5*9}
\pgfmathsetmacro{\xy}{9}
  \filldraw[black] (\xx,\xy) circle (8pt) node[below left] {$\pi_V(v)$};

\pgfmathsetmacro{\ax}{8- 0.5*14}
\pgfmathsetmacro{\ay}{14}
\filldraw[black] (\ax,\ay) circle (8pt);

\pgfmathsetmacro{\ax}{13- 0.5*16}
\pgfmathsetmacro{\ay}{16}
\filldraw[black] (\ax,\ay) circle (8pt);

     \pgfmathsetmacro{\xx}{16- 0.5*12}
    \pgfmathsetmacro{\xy}{12}
    \filldraw[black] (\xx,\xy) circle (8pt) node[above right] {$x$};
 
 \pgfmathsetmacro{\xx}{21- 0.5*7}
 \pgfmathsetmacro{\xy}{7}
 \filldraw[black] (\xx,\xy) circle (8pt) node[below right] {$\pi_V(u)$};

    \pgfmathsetmacro{\ax}{12- 0.5*5}
    \pgfmathsetmacro{\ay}{5}
    \filldraw[black] (\ax,\ay) circle (8pt);
    
    \pgfmathsetmacro{\ax}{14- 0.5*2}
    \pgfmathsetmacro{\ay}{2}
    \filldraw[black] (\ax,\ay) circle (8pt);
    
\end{tikzpicture}
    \caption{Two initial-paths between $\pi_V(v)$ and $\pi_V(u)$.}
    \label{fig:initialpath}
\end{figure}

The following result is a discrete version of the classical \emph{center of mass method}, which can be directly derived from the work of Spielman-Teng \cite[Theorem 4.2]{ST2007}.


\begin{lem}\label{lem:centroid}
    Let $f: S\to \mathbb{S} $ be an analytic map from  a Riemann surface $S$ to the unit sphere $\mathbb{S}$ in $\mathbb{R}^3$. For $i=1,\ldots ,n$, let $C_i$ be a circle with center $c_i$ on $S$. 
    Suppose that no point on $\mathbb{S}$ lies in at least half of the regions that have boundaries $f(C_1),\ldots,f(C_n)$.
    Then there is a M\"{o}bius transformation $\Psi$ such that 
$$\frac{\sum_{i=1}^{n}\Psi(f(c_i))}{n}=\mathbf{0}$$
where $\mathbf{0}$ is the zero vector in $\mathbb{R}^3$.
\end{lem}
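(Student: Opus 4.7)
The plan is to follow Spielman-Teng's center-of-mass method (\cite{ST2007}, Theorem 4.2). I would parametrize the M\"{o}bius transformations of $\mathbb{S}$ by the open unit ball $B^3 \subset \mathbb{R}^3$: for each $v \in B^3$, let $\Psi_v$ denote the unique conformal automorphism of $\mathbb{S}$ that extends, in the Poincar\'{e} ball model of hyperbolic $3$-space, to the isometry sending $\mathbf{0}$ to $v$ with no rotational part. This family is continuous in $v$, with $\Psi_{\mathbf{0}} = \mathrm{id}$, and has the essential boundary behavior: for each $q \in \mathbb{S}$, $\Psi_v(x) \to q$ whenever $v \to q$ and $x \neq -q$, while $\Psi_v(-q) = -q$.

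Next I would define the continuous map
\[
F: B^3 \to \overline{B^3}, \qquad F(v) = \frac{1}{n}\sum_{i=1}^n \Psi_v(f(c_i)),
\]
so that any zero of $F$ supplies the desired $\Psi := \Psi_v$. The boundary behavior yields, for every $q \in \mathbb{S}$,
\[
\lim_{v \to q} F(v) = \frac{n - 2k_q}{n}\,q, \qquad k_q := \#\{i : f(c_i) = -q\},
\]
and the hypothesis of the lemma forces $k_q < n/2$: if $f(c_i) = -q$, then the region of $\mathbb{S}$ bounded by $f(C_i)$ and containing $f(c_i)$ contains $-q$, so $k_q$ is at most the number of regions containing $-q$, which is less than $n/2$ by hypothesis. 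Consequently this limit is a strictly positive multiple of $q$, and $F$ is bounded away from $\mathbf{0}$ on some collar of $\partial B^3$.

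A standard degree argument then closes the proof: for $r$ close enough to $1$, the restriction of $v \mapsto F(v)/|F(v)|$ to the sphere $\{|v|=r\}$ has strictly positive inner product with $v/|v|$, so it is homotopic through nonvanishing maps to the identity of $\mathbb{S}$; its degree is therefore $1$, forcing $F$ to vanish somewhere inside. The main delicate point to keep track of is the convention on ``the region bounded by $f(C_i)$", which must be taken to be the region containing $f(c_i)$ (the natural choice, since $C_i$ bounds a disk on $S$ containing its own center $c_i$); this is precisely what validates the implication ``$f(c_i) = -q \Rightarrow -q$ lies in the region'' used in the reduction, and it is the only substantive step that is not immediate from the Spielman-Teng framework.
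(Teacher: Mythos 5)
Your proposal reconstructs exactly the argument that the paper itself does not spell out: the paper simply quotes this lemma as a consequence of Spielman--Teng's center-of-mass theorem, and your route (parametrize the conformal automorphisms of $\mathbb{S}$ by the open ball $B^3$, analyze the behavior near $\partial B^3$, and finish with a degree/Brouwer argument) is the standard proof of that theorem. Your use of the hypothesis is also the right one: if at least $n/2$ of the points $f(c_i)$ coincided at a point $p$, then $p$ would lie in at least $n/2$ of the regions, since each $f(c_i)$ lies in the region bounded by $f(C_i)$ (the image of the disk bounded by $C_i$, which is indeed the convention used in the paper's main proof).

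The one step that does not hold as you justify it is the passage from the pointwise boundary limits to ``$F$ is bounded away from $\mathbf{0}$ on a collar of $\partial B^3$''. First, the limit you write down exists only along radial approach: for $v\to q$ non-radially the repelling fixed point of $\Psi_v$ is $-v/|v|\neq -q$, so $\Psi_v(-q)$ need not stay at $-q$ and $\lim_{v\to q}F(v)$ need not exist when $k_q\geq 1$. More importantly, even the radial limits are not attained uniformly in $q$ (the limit function jumps with $k_q$), and pointwise limits on the boundary do not control $\inf|F|$ over a collar; this non-uniformity is precisely the classical delicate point in Hersch/Spielman--Teng center-of-mass arguments, not the region convention you single out. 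The collar estimate is nonetheless true for a fixed finite configuration and should be proved directly: by the hypothesis at most $m<n/2$ of the points $f(c_i)$ coincide at any single point, so one can pick $\eta>0$ smaller than half the minimal distance between distinct values of the $f(c_i)$, ensuring every spherical cap of radius $\eta$ contains at most $m$ of them; then for $1-|v|$ sufficiently small, uniformly in the direction $q=v/|v|$, the map $\Psi_v$ sends every point outside the $\eta$-cap around $-q$ to within $\varepsilon$ of $q$, which gives $F(v)\cdot q\geq \bigl(n-2m-n\varepsilon\bigr)/n>0$ once $\varepsilon<1/n$. With this uniform inequality in place of your limit argument, your homotopy/degree step on the sphere $\{|v|=r\}$ goes through verbatim and the proof is complete.
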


If the graph $G$ is not fully triangulated, additional edges can be added to make it.
By \eqref{eq:orig}, this operation does not decrease $\lambda_2(G,\delta \Omega)$.
The following observation shows that there exists a way to add edges while keeping the graph with bounded-degree and genus $g$.
Therefore, it suffices to prove Theorem \ref{thm:main} for a fully triangulated graph $G$.

\begin{Obs}\label{obs:full}
For a bounded-degree graph $G$ of genus $g$, there exists a fully triangulated graph $H$ of genus $g$ with bounded degree such that $G$ is a spanning subgraph of $H$.
\end{Obs}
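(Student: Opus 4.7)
The plan is to fix a cellular embedding of $G$ on a surface of genus $g$ and triangulate every non-triangular face by adding diagonals between its boundary vertices. Since each such diagonal merely splits a face (a topological disk) into two smaller faces without introducing any new handle, the genus of the embedding is preserved throughout the process. The resulting graph $H$ therefore has the same vertex set as $G$, contains $G$ as a spanning subgraph, is fully triangulated, and still has genus $g$.

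To keep $H$ bounded-degree, I would triangulate each face $F$ with boundary walk $w_1, w_2, \ldots, w_k$ using a \emph{zigzag} (snake) sequence of diagonals: first $w_1 w_3$, then $w_3 w_k$, $w_4 w_k$, $w_4 w_{k-1}$, $w_5 w_{k-1}$, $w_5 w_{k-2}$, and so on, alternately consuming vertices from the two ends of the boundary. Equivalently, one iteratively cuts ears at $w_2, w_1, w_3, w_k, w_4, w_{k-1}, w_5, \ldots$ in that order. A direct inspection of this construction shows that every corner $w_i$ of $F$ is incident to at most two of the added diagonals. A simple corner-counting argument then closes the proof: every vertex $v \in V(G)$ has exactly $d_G(v) \leq D$ corners in the face-boundary walks of the embedding, so the degree of $v$ in $H$ is bounded by $d_H(v) \leq d_G(v) + 2\,d_G(v) \leq 3D$. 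Hence $\Delta(H) \leq 3D$ and $H$ is bounded-degree as required.

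The main obstacle I anticipate is that a face-boundary walk need not be a simple cycle: a vertex of $G$ may appear several times on the same face, in which case a zigzag diagonal could accidentally coincide with an existing edge of $G$, close a loop, or create a parallel edge in $H$. I would handle these degenerate configurations by either preprocessing the embedding so that all face boundaries are simple, or by locally modifying the zigzag in each offending face--replacing a bad diagonal by an alternative diagonal between two distinct, non-adjacent corners of the same face--while preserving the "at most two diagonals per corner" bound, so that the degree estimate above is unaffected.
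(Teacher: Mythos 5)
Your proposal is correct and takes essentially the same approach as the paper: triangulate each non-triangular face with a zigzag (snake) sequence of diagonals so that every corner of the face meets at most two added edges, bound the new degree of a vertex by the number of its face-corners (at most its degree) times this constant, and first reduce to the case where every face boundary is a cycle. The degenerate situations you flag (non-simple boundary walks, diagonals coinciding with existing edges) are handled in the paper with the same brevity, by adding a bounded number of edges per vertex to make each face boundary a cycle before applying the zigzag.
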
 
\begin{proof}
If a face $F$ of $G$ has a boundary that is not a cycle, we can add at most two edges to each vertex inside $F$ so that its boundary vertices form a cycle. This process subdivides $F$ into new faces, each bounded by a cycle, while preserving the genus of $G$.
Then, we may assume that the boundary of every face of $G$ is a cycle.

Consider a face of $G$ whose boundary consists of vertices $v_0, v_1, \ldots,v_s,v_0$ that appear in cyclic order.
If the face is not a triangle, we add a path $v_1v_sv_2v_{s-1}\cdots v_{t}$, where $t=\frac{s}{2}$ for even $s$ and $t=\lceil\frac{s+2}{2}\rceil$ for odd $s$ to make it fully triangulated. Since the number of faces incident with a vertex is at most its degree, the increase of the degree of each vertex is bounded. Thus, we get a fully triangulated graph of genus $g$ with bounded degree that contains $G$ as a spanning subgraph.
\end{proof}

We are now ready to present the proof of the main result in this paper.

\begin{proof}[Proof of Theorem \ref{thm:main}]
Let $G$ be a graph with boundary $\delta \Omega$. 
By \eqref{eq:orig}, adding edges to $G$ does not decrease $\lambda_2(G, \delta \Omega)$. 
If $G$ is not fully triangulated, Observation \ref{obs:full} shows that additional edges can be inserted into $G$ to obtain a fully triangulated bounded-degree graph of genus $g$. 
Therefore, it suffices to consider the case where $G$ is fully triangulated.

Recall that $G^{(k)}$ denotes the graph obtained from $G$ after $k$ iterations of the hexagonal-subdivision. 
Let $\delta \Omega^{(k)}$ be the boundary of $G^{(k)}$ as defined in Lemma \ref{lem:main}.
By circle packing theorem (Theorem \ref{thm:CPT}), there exists a Riemann surface $S^{(k)}$ of genus $g$ such that $G^{(k)}$ realize a univalent circle packing $\mathcal{P}^{(k)}$ on $S^{(k)}$.
By Theorem~\ref{thm:RR}, one can find an analytic map $f^{(k)}: S^{(k)} \to \widehat{\mathbb{C}}$ of degree $O(g)$ with $O(g)$ branch points.
Embedding the Riemann sphere $\widehat{\mathbb{C}}$ as the unit sphere $\mathbb{S}$ in $\mathbb{R}^3$ via the inverse stereographic projection, we obtain a composition $ \mathbf{v}^{(k)}: S^{(k)} \xrightarrow{f^{(k)}} \widehat{\mathbb{C}} \hookrightarrow \mathbb{S},$
which defines an analytic map from $S^{(k)}$ to $\mathbb{S}$.

For each vertex $v \in G^{(k)}$, let $C^{(k)}_v$ denote the circle in $\mathcal{P}^{(k)}$ centered at $v$. 
The circle $C^{(k)}_v$ is mapped by $\mathbf{v}^{(k)}$ to a connected region bounded by $\mathbf{v}^{(k)}(C^{(k)}_v)$ on $\mathbb{S}$. Moreover, each point on $\mathbb{S}$ is covered by at most $O(g)$ such regions.
Since $|\delta\Omega^{(k)}|=|\delta\Omega|\Theta(4^k)$, when $k$ is sufficiently large with respect to $g$, no point on $\mathbb{S}$ lies in more than half of the regions bounded by $\mathbf{v}^{(k)}(C^{(k)}_v)$ for all $v \in \delta\Omega^{(k)} $.
By Lemma \ref{lem:centroid}, we may assume that the boundary vertices satisfy $\sum_{v\in \delta \Omega^{(k)}} \mathbf{v}^{(k)}(v)=\mathbf{0}$; 
otherwise, $\mathbf{v}^{(k)}$ can be post-composed with a suitable M\"{o}bius transformation to achieve this normalization. 
Since all of the $\mathbf{v}^{(k)}(v)$ are on the unit sphere $\mathbb{S}$, we have ${\sum_{v\in \delta \Omega^{(k)}}\lVert \mathbf{v}^{(k)}(v)\rVert^2}=| \delta \Omega^{(k)}|$.

By the result of Lin-Zhao \cite[Lemma 2.4]{lin2025steklov}, the scalar values $f(x)$ in \eqref{eq:orig} can be replaced with the vectors $\mathbf{v}(x) \in \mathbb{R}^n$ in \eqref{eq:orig}, that is, 
\begin{align*}
\lambda_2(G,\delta \Omega)&=\min\frac{\sum_{\{x,y\} \in E}\lVert \mathbf{v}(x)-\mathbf{v}(y) \rVert^2}{\sum_{x\in \delta \Omega}\lVert \mathbf{v}(x)\rVert^2},
\end{align*}
where the minimum is taken over all sets of $n$-dimensional vectors such that $\sum_{x\in \delta \Omega} \mathbf{v}(x)=\mathbf{0}$ and at least one $\mathbf{v}(x)$ is nonzero. Thus, we have 
\begin{align}
    \lambda_2(G^{(k)},\delta \Omega^{(k)}) &\leq\frac{\sum_{\{u,v\} \in E(G^{(k)})}\lVert \mathbf{v}^{(k)}(u)-\mathbf{v}^{(k)}(v) \rVert^2}{\sum_{v\in \delta \Omega^{(k)}}\lVert \mathbf{v}^{(k)}(v)\rVert^2} \notag\\
    & =\frac{\sum_{\{u,v\} \in E(G^{(k)})}\lVert \mathbf{v}^{(k)}(u)-\mathbf{v}^{(k)}(v) \rVert^2}{| \delta \Omega^{(k)}|}.\label{eq:num}
\end{align}
Then it remains to bound the numerator.

Away from its branch points, the analytic map \(\mathbf{v}^{(k)}: S^{(k)} \to \mathbb{S}\) is locally conformal,
which implies that infinitesimal circles on the compact Riemann surface $S^{(k)}$ are mapped to infinitesimal circles on $\mathbb{S}$.
More formally, for any $\epsilon, \kappa >0$, there exists $\tau>0$ such that for any $\tau'<\tau$ and any point $p$ at least distance $\kappa$ from all branch points of $\mathbf{v}^{(k)}$,  the radius $\tau'$ distortion
\begin{align}\label{eq:dis}
  \frac{\max_{q:|p-q|=\tau'}|\mathbf{v}^{(k)}(p)-\mathbf{v}^{(k)}(q)|}{\min_{q:|p-q|=\tau'}|\mathbf{v}^{(k)}(p)-\mathbf{v}^{(k)}(q)|}-1 < \epsilon.
\end{align}
Indeed, by the convergence result of $S^{(k)}$ (Lemma \ref{lem:con}), 
there exists some $N$ such that for all $k >N$, all of circles in ${\mathcal{P}}^{(k)}$ have radius at most $\tau$. 
Fix $\epsilon$ and $\kappa$, and let $N$ and $\tau$ be chosen such that inequality \eqref{eq:dis} is satisfied for $\mathbf{v}^{(k)}$ with $k >N$. 

For $k > N$ and $i=1,2,3$, let $\mathbf{v}_i^{(k)}$ be the $i$-th coordinate of $\mathbf{v}^{(k)}$. For any $(u,v) \in E(G^{(k)})$, we have 
\begin{align}\label{eq:term}
\lVert \mathbf{v}^{(k)}(u)-\mathbf{v}^{(k)}(v) \rVert^2=\sum_{i=1}^3(\mathbf{v}_i^{(k)}(u)-\mathbf{v}_i^{(k)}(v))^2.
\end{align}
The distance between of $u$ and $v$ on $S^{(k)}$ equals to $r^{(k)}_v+r^{(k)}_u$, where $r^{(k)}_v$ and $r^{(k)}_u$ are the radii of $C^{(k)}_v$ and $C^{(k)}_v$, respectively.
As $k \to \infty$, both $r^{(k)}_v$ and $r^{(k)}_u$ tend to zero. 
The smoothness of the function family $\mathbf{v}^{(k)}$, their convergence to $\mathbf{v}^{(\infty)}$, and the compactness of $S^{(k)}$ ensure that each term on the right-hand side of \eqref{eq:term} is arbitrarily close to its first-order approximation. Thus we have 
\begin{align*}
(\mathbf{v}_i^{(k)}(u)-\mathbf{v}_i^{(k)}(v))^2 &\leq (1+o(1))(r^{(k)}_v+r^{(k)}_u)^2\lVert\nabla \mathbf{v}_i^{(k)}(v)\rVert^2\\
&\leq (2+o(1)) \left((r^{(k)}_v)^2+(r^{(k)}_u)^2 \right)\lVert\nabla \mathbf{v}_i^{(k)}(v)\rVert^2
\end{align*}
and 
\begin{align*}
(\mathbf{v}_i^{(k)}(u)-\mathbf{v}_i^{(k)}(v))^2 \leq (2+o(1)) \left((r^{(k)}_v)^2+(r^{(k)}_u)^2 \right)\lVert\nabla \mathbf{v}_i^{(k)}(u)\rVert^2
\end{align*}
Hence, we have 
\begin{align}\label{eq:app}
(\mathbf{v}_i^{(k)}(u)-\mathbf{v}_i^{(k)}(v))^2 \leq O(1) \left((r^{(k)}_v)^2\lVert\nabla \mathbf{v}_i^{(k)}(v)\rVert^2+(r^{(k)}_u)^2\lVert\nabla \mathbf{v}_i^{(k)}(u)\rVert^2\right).
\end{align}
Since the degree of $G^{(k)}$ is bounded, every vertex of $G^{(k)}$ appears in a constant number of edges. It follows that 
\begin{equation*}
    \sum_{\{u,v\} \in E(G^{(k)})}\lVert \mathbf{v}^{(k)}(u)-\mathbf{v}^{(k)}(v) \rVert^2\leq O(1)\sum_{v\in G^{(k)}}(r^{(k)}_v)^2\lVert\nabla \mathbf{v}^{(k)}(v)\rVert^2.
\end{equation*}
It implies that the contribution per unit area of the circle in $\mathcal{P}^{(k)}$ is at most $O(1)\lVert\nabla \mathbf{v}^{(k)}(v)\rVert^2$, which is bounded as $k$ goes to infinity and $\mathbf{v}^{(k)}$ approaches $\mathbf{v}^{(\infty)}$.

Let $S^{(k)}_1$ be the union of $\kappa$-neighborhoods of the branch points of $\mathbf{v}^{(k)}$ on $S^{(k)}$, and set $S^{(k)}_2 = S^{(k)} \setminus S^{(k)}_1$. 
Consider the circles in $\mathcal{P}^{(k)}$ intersecting $S^{(k)}_1$. 
As $\kappa \to 0$ and $k \to \infty$, the total area of these circles tends to zero, 
since the area of $S^{(k)}_1$ shrinks with $\kappa$ and the circles in $\mathcal{P}^{(k)}$ become arbitrarily small as $k$ increases.
Therefore, their contribution to the numerator in \eqref{eq:num} vanishes in the limit.
Then it remains to consider only those circles that are entirely contained in $S^{(k)}_2$.

For $C \in \mathcal{P}^{(k)}$, let $D(\mathbf{v}^{(k)}(C))$ be the length of the longest geodesic contained in the region bounded by $\mathbf{v}^{(k)}(C)$, and let  $A(\mathbf{v}^{(k)}(C))$  denote its area. When $C$ is entirely contained in $S^{(k)}_2$, the curve $\mathbf{v}^{(k)}(C)$ is approximately circular. 
Suppose that the radius of each circle in $\mathcal{P}^{(k)}$ is at most $\tau$.
In particular, the radius $\tau'$ ($\tau' < \tau$) distortion of $\mathbf{v}^{(k)}$ inside of $S_2^{(k)}$ is at most $\epsilon$ by \eqref{eq:dis}, which implies that $D^2(\mathbf{v}^{(k)}(C))/A(\mathbf{v}^{(k)}(C)) \leq O(1+\epsilon)$.
Let $u,v$ be vertices such that $\{v,u\} \in E(G^{(k)})$ and  both $C_v^{(k)}$ and $C_u^{(k)}$ are entirely contained in $S^{(k)}_2$.
Then we have  that 
\begin{align*}
\lVert \mathbf{v}^{(k)}(u)-\mathbf{v}^{(k)}(v) \rVert^2 &\leq \left(D(\mathbf{v}^{(k)}(C_u^{(k)}))+D(\mathbf{v}^{(k)}(C_v^{(k)}))\right)^2 \\
& \leq 2\left((D^2(\mathbf{v}^{(k)}(C_u^{(k)}))+D^2(\mathbf{v}^{(k)}(C_v^{(k)}))\right) \\
& \leq O(1+\epsilon)\left(A(\mathbf{v}^{(k)}(C_u^{(k)}))+A(\mathbf{v}^{(k)}(C_v^{(k)}))\right).
\end{align*}
Recall that every vertex of $G^{(k)}$ appears in a constant number of edges, and that each point on $\mathbb{S}$ is covered by at most $O(g)$ such regions. 
Consequently,
\begin{equation*}
    \sum_{\{u,v\} \in E(G^{(k)})}\lVert \mathbf{v}^{(k)}(u)-\mathbf{v}^{(k)}(v) \rVert^2\leq O(g(1+\epsilon))A(\mathbb{S}).
\end{equation*}
By \eqref{eq:num}, we have $ \lambda_2(G^{(k)},\delta \Omega^{(k)}) \leq O\left(\frac{g}{|\delta \Omega^{(k)}|}\right)$. 
It follows  from Lemma \ref{lem:main} that  $ \lambda_2(G,\delta \Omega) \leq O\left(\frac{g}{|\delta \Omega|}\right)$.
\end{proof}

\section{Concluding remark}
In this section, we present an application of our results to electrical networks.
If every edge of $G$ is equipped with a resistor of unit resistance, we call $G$ a \emph{unit electrical network}.
A fundamental quantity in such networks is the effective resistance between two vertices, which is determined by the first non-trivial Steklov eigenvalue.

\begin{pro}
Let $G=(V,E)$ be a unit electrical network. For any $u,v\in V$, the effective resistance between $u$ and $v$ is $\frac{2}{\lambda_{2}(G,\{u,v\})}$.
\end{pro}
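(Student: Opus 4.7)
The plan is to identify the Steklov eigenfunction for $\lambda_2(G,\{u,v\})$ with the electric potential produced when $u$ and $v$ are held at opposite voltages, and then read off the effective resistance via Ohm's law.

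First, since $|\delta\Omega|=2$, the Steklov spectrum of $(G,\{u,v\})$ consists of exactly the two eigenvalues $\lambda_1=0$ and $\lambda_2$. Let $f\in\mathbb{R}^V$ be a nontrivial eigenfunction associated to $\lambda_2$. The orthogonality condition in \eqref{eq:Rq1} gives $f\perp\mathbf{1}_{\delta\Omega}$, i.e.\ $f(u)+f(v)=0$. Assuming $G$ is connected (so that $\lambda_2>0$), one checks $f(u)\neq 0$: otherwise $f$ would vanish on $\delta\Omega$, and being harmonic on the interior it would vanish identically. Hence we may normalize so that $f(u)=1$ and $f(v)=-1$.

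Next, I interpret $f$ as a voltage. By the Steklov equations, $(\Delta f)(x)=0$ for every $x\in V\setminus\{u,v\}$, so $f$ is a harmonic function on the interior with prescribed boundary values $f(u)=1$ and $f(v)=-1$. This is exactly the voltage potential in the unit electrical network when a potential difference of $2$ volts is imposed between $u$ and $v$.

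Finally, the boundary condition gives the current. The total current flowing out of $u$ is, by definition of $\partial_\nu$,
\begin{equation*}
I \;=\; (\partial_\nu f)(u) \;=\; \lambda_2\, f(u) \;=\; \lambda_2.
\end{equation*}
Kirchhoff's current law guarantees that the same current $I$ flows into $v$, so $I$ is the current that traverses the network from $u$ to $v$ under the imposed voltage drop of $f(u)-f(v)=2$. Ohm's law then yields
\begin{equation*}
R_{\mathrm{eff}}(u,v) \;=\; \frac{f(u)-f(v)}{I} \;=\; \frac{2}{\lambda_2(G,\{u,v\})}.
\end{equation*}

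The only mild obstacle is the normalization step, which requires arguing that no $\lambda_2$-eigenfunction vanishes at $u$ (or equivalently at $v$); this is handled by the harmonic-extension uniqueness argument above, using connectedness of $G$. Everything else is a direct translation between the Steklov boundary condition and the electrical network formalism.
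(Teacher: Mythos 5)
Your proof is correct. The paper states this proposition in the concluding remarks without giving a proof, so there is nothing to compare against; your argument is the standard one: the $\lambda_2$-eigenfunction is antisymmetric on $\{u,v\}$ (being orthogonal to $\mathbf{1}_{\delta\Omega}$) and nonvanishing there (by uniqueness of the harmonic extension on a connected graph), so after normalizing $f(u)=1$, $f(v)=-1$ it is exactly the voltage potential, and Ohm's law gives $R_{\mathrm{eff}}(u,v)=2/\lambda_2$. Equivalently, one can note that the $2\times 2$ Dirichlet-to-Neumann matrix of a connected unit network equals $R_{\mathrm{eff}}^{-1}\bigl(\begin{smallmatrix}1&-1\\-1&1\end{smallmatrix}\bigr)$, whose nonzero eigenvalue is $2/R_{\mathrm{eff}}$, which packages your computation in one line; the only (harmless) caveat is the connectedness assumption you already flag, without which both sides degenerate.
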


In practical electrical networks, every component has a bounded number of pins. 
When the layout becomes too complex to be realized on a single-layer printed circuit board (PCB),  engineers may often turn to multi-layer PCBs and use vias to avoid wire crossings.
From a topological perspective, these operations correspond to embedding the network into a surface of higher genus.
Consequently, the graph abstracted from the electrical network  is naturally of bounded degree and positive genus. 
Our result provides a lower bound on the effective resistance between any two vertices in terms of genus of the network.

\begin{thm}
Let $G$ be a unit electrical network of bounded degree and genus $g$. 
Then the effective resistance between  two vertices is at least $c/(g+1)$  for some positive constant $c$.
\end{thm}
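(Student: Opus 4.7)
The plan is to combine the preceding proposition, which identifies the effective resistance between two vertices with $2/\lambda_2(G,\{u,v\})$, with the main theorem of the paper, Theorem~\ref{thm:main}, applied to the two-vertex boundary $\delta\Omega=\{u,v\}$. Given any $u,v\in V(G)$, I would set $\delta\Omega=\{u,v\}$ so that $|\delta\Omega|=2$, and then reduce everything to an upper bound on $\lambda_2(G,\{u,v\})$.

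First I would handle the positive-genus case $g\ge 1$ directly: since $G$ is bounded-degree of genus $g$, Theorem~\ref{thm:main} yields
\[
  \lambda_2(G,\{u,v\}) \;\leq\; O\!\left(\frac{g}{|\delta\Omega|}\right) \;=\; O(g).
\]
For the remaining planar case $g=0$, Theorem~\ref{thm:main} does not apply, but the Lin--Zhao bound $\lambda_2(G,\delta\Omega)\leq 8D/|\delta\Omega|$ recalled in the introduction gives $\lambda_2(G,\{u,v\})\leq 4D=O(1)$, since $D$ is bounded by assumption. Combining the two cases, there exists an absolute constant $C>0$ (depending only on the degree bound) such that
\[
  \lambda_2(G,\{u,v\}) \;\leq\; C(g+1)
\]
for every bounded-degree graph of genus $g$ and every pair of vertices $u,v$.

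Finally, I would invoke the proposition: the effective resistance between $u$ and $v$ equals
\[
  \frac{2}{\lambda_2(G,\{u,v\})} \;\geq\; \frac{2}{C(g+1)} \;=\; \frac{c}{g+1},
\]
with $c=2/C>0$, which is exactly the claimed lower bound.

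Since both ingredients (the Steklov/resistance identity and the eigenvalue bound) are already established, there is no real obstacle beyond packaging them correctly. The only point that warrants care is that Theorem~\ref{thm:main} is stated for \emph{positive} genus, so one must treat $g=0$ separately using the Lin--Zhao planar bound; this is why the final bound is phrased in terms of $g+1$ rather than $g$, absorbing the planar case into the same expression.
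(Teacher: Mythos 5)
Your proposal is correct and is essentially the argument the paper intends: combine the proposition identifying the effective resistance with $2/\lambda_2(G,\{u,v\})$ with the bound $\lambda_2(G,\{u,v\})\leq O(g)$ from Theorem \ref{thm:main} applied to the two-point boundary, handling $g=0$ via the Lin--Zhao planar bound, which is exactly why the statement is phrased with $g+1$. Your explicit case split and constant bookkeeping are a faithful (and slightly more carefully written) version of the paper's reasoning.
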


\section*{Acknowledgments}
Lixiang Chen was partially supported by the China Postdoctoral Science Foundation (No. 2024M761510) and the National Natural Science Foundation of China (No. 12501485). Yongtang Shi was partially supported by the National Natural Science Foundation of China (No. 12431013).
\bibliographystyle{plain}
\bibliography{ref}

\end{document}